
\documentclass[11pt,a4paper,swedish,english]{amsart}
\usepackage{babel}
\usepackage[square,numbers]{natbib}
\usepackage{url}

\usepackage[T1]{fontenc}
\usepackage[latin9]{inputenc}
\setlength{\parskip}{\bigskipamount}
\setlength{\parindent}{0pt}
\usepackage{float}
\usepackage{textcomp}
\usepackage{amsfonts,amsmath,amssymb,amsthm}
\usepackage{graphicx}
\usepackage{esint}

\makeatletter
\@ifundefined{textcolor}{}
{%
 \definecolor{BLACK}{gray}{0}
 \definecolor{WHITE}{gray}{1}
 \definecolor{RED}{rgb}{1,0,0}
 \definecolor{GREEN}{rgb}{0,1,0}
 \definecolor{BLUE}{rgb}{0,0,1}
 \definecolor{CYAN}{cmyk}{1,0,0,0}
 \definecolor{MAGENTA}{cmyk}{0,1,0,0}
 \definecolor{YELLOW}{cmyk}{0,0,1,0}
 }
\numberwithin{equation}{section}
\numberwithin{figure}{section}
\theoremstyle{plain}
\newtheorem{thm}{Theorem}
  \theoremstyle{definition}
  \newtheorem{defn}[thm]{Definition}
  \theoremstyle{plain}
  \newtheorem{lem}[thm]{Lemma}
  \theoremstyle{plain}
  \newtheorem{cor}[thm]{Corollary}

\makeatother

\begin{document}

\title{\noindent The Dynamical Functional Particle Method}

\author{Mårten Gulliksson, Sverker Edvardsson, and Andreas Lind}

\address{Division of Computational Mathematics and Physics, Mid Sweden University,
SE-85170 Sundsvall, Sweden}

\email{marten.gulliksson@miun.se,sverker.edvardsson@miun.se,andreas.lind@miun.se}
\begin{abstract}
We present a new algorithm which is named the Dynamical Functional
Particle Method, DFPM. It is based on the idea of formulating a finite
dimensional damped dynamical system whose stationary points are the
solution to the original equations. The resulting Hamiltonian dynamical
system makes it possible to apply efficient symplectic integrators.
Other attractive properties of DFPM are that it has an exponential
convergence rate, automatically includes a sparse formulation and
in many cases can solve nonlinear problems without any special treatment.
We study the convergence and convergence rate of DFPM. It is shown
that for the discretized symmetric eigenvalue problems the computational
complexity is given by $\mathcal{O}\left(N^{(d+1)/{d}}\right)$,
where \emph{d} is the dimension of the problem and \emph{N} is the
vector size. An illustrative example of this is made for the 2-dimensional
Schrödinger equation. Comparisons are made with the standard numerical
libraries ARPACK and LAPACK. The conjugated gradient method and shifted
power method are tested as well. It is concluded that DFPM is both
versatile and efficient.
\end{abstract}

\keywords{\texttt{Dynamical systems, Linear eigenvalue problems, ARPACK, Particle
methods, DFPM, Lyapunov function, Hamiltonian dynamics}}

\maketitle

\section{\textbf{Introduction}}

\subsection{The Dynamical Functional Particle Method}

The goal of this paper is to present an idea for solving equations
by formulating a dynamical system whose stationary solution is the
solution of the original equations. Examples of equations that can
be solved are ordinary and partial differential equations, linear
or nonlinear system of equations, and particularly eigenvalue problems.
In this section we begin by formulating the equation and the dynamical
system in an abstract setting. We then give the corresponding finite
dimensional formulation by discretizing the infinite dimensional problem.
This discretized problem will then be analyzed and studied throughout
the paper.

Let $\mathcal{F}$ be an operator and $v=v(x),\, v:X\rightarrow\mathbb{R}^{k},\, k\in\mathbb{N}$,
where $X$ is a Banach space that will be defined by the actual problem
setting. We consider the abstract equation\begin{equation}
\mathcal{F}(v)=0\label{Lu}\end{equation}
that could be, e.g., a differential equation. Further, a parameter
$t$ is introduced, interpreted as artificial time, which belongs
to the interval $T=[t_{0},\infty)$. A related equation in
 $u=u(x,t),\, u:X\times T\rightarrow\mathbb{R}^{k}$
is formulated as
\begin{equation}
\mu u_{tt}+\eta u_{t}=\mathcal{F}(u).\label{LuTime}
\end{equation}
The parameters $\mu=\mu(x,u(x,t),t),\,\eta=\eta(x,u(x,t),t)$ are
the mass and damping parameters. The idea in the infinite dimensional
setting is to solve (\ref{Lu}) by solving (\ref{LuTime}) in such
a way that $u_{t},\, u_{tt}\rightarrow0$ when $t\rightarrow t_{1},\, t_{1}\leq\infty$,
i.e., $\lim_{t\rightarrow t_{1}}u(x,t)=v(x).$ In addition, the two
initial conditions $u(t_{0})$ and $u_{t}(t_{0})$ are applied.

Both (\ref{Lu}) and (\ref{LuTime}) need to be discretized to attain
a numerical solution. For simplicity, we exemplify by applying finite
differences but it is possible to use, e.g., finite elements, basis
sets or any other method of discretization. We define a grid $x_{1},x_{2},\ldots$
and approximate $v(x_{i})$ by $v_{i}$ and assume that the discretized
version of (\ref{Lu}) can be written as \begin{equation}
F_{i}(v_{1}\ldots,v_{n})=0,\, i=1,\ldots,n\label{eq:Fi}\end{equation}
where $F_{i}:\mathbb{R}^{n}\rightarrow\mathbb{R}$.

Turning to the dynamical system (\ref{LuTime}) it is discretized
such that $u_{i}(t)$ approximates $u(x_{i},t)$ and $\mu_{i}(t)=\mu(x_{i},u_{i}(t),t),\,\eta_{i}(t)=\eta(x_{i},u_{i}(t),t)$
for $i=1,\ldots,n$. Further, $\mathcal{F}(u)$ is discretized as
$\mathcal{F}(v)$ in (\ref{eq:Fi}) and we approximate (\ref{LuTime})
with the system of ordinary differential equations \begin{equation}
\mu_{i}\ddot{u}_{i}+\eta_{i}\dot{u}_{i}=F_{i}(u_{1},\ldots,u_{n}),\, i=1,\ldots,n.\label{DFPM}\end{equation}
with initial conditions $u_{i}(t_{0}),\,\dot{u}_{i}(t_{0})$. Our
idea in the discrete setting is to solve (\ref{eq:Fi}) by solving
(\ref{DFPM}) such that $\dot{u}_{i}(t),\,\ddot{u}_{i}(t)\rightarrow0$
when $t\rightarrow t_{1},\, t_{1}\leq\infty$, i.e., $\lim_{t\rightarrow t_{1}}u_{i}(t)=v_{i}.$
The overall approach for solving (\ref{Lu}) using (\ref{DFPM}) is
named the \textsl{Dynamical Functional Particle Method}, DFPM.

\subsection{Related work and topics}

In a recent mechanics oriented article the connection between classical
particle methods and differential equations were studied \citep{EdGuPe11}.
This work had a clear focus on the physical understanding and mechanical
properties. The present work, however, turns the focus towards the
mathematical aspects in the attempt to answer questions related to
convergence, rate of convergence and the underlying reasons why DFPM
is seen to be efficient for some mathematical problems. The idea of
studying dynamical particle systems certainly has its origin in basic
physics and astronomy. The assumption there is that all matter consists
of particles. Their interactions are known and they follow the equations
of motion. The basic idea DFPM, however, is that the {}``forces''
and {}``particles'' instead are viewed as mathematical abstract
objects rather than physical. Thus mathematical quasi-particles are
formed. Their interactions are determined by the functional equation
at hand. From the mechanical point of view the quasi particles in
(\ref{eq:Fi}) have masses $\mu_{i}$ and all follow a dissipated
motion governed by $\eta_{i}$. Such Hamiltonian systems have many
interesting and useful properties, see, e.g., \citep{Goldstein}. In
Hamiltonian systems it is well known that symplectic integration techniques
are especially attractive \citep{Calvo,Reich}.

The idea of solving a time dependent problem to get the stationary
solution has also previously been applied in mathematics. A simple
example is the solution of an elliptic PDE such as in the heat equation,
see Sincovec and Madsen \citep{SiMa75}. Indeed, steady state solutions
are often the objective when simulating time-dependent PDE systems.
Since the stationary state is seeked, the evolution of the system
is considered to take place in \emph{artificial time}. The concept
of artificial time is further discussed and analyzed in \citep{AsHuDo07}.

A general approach is that of continuation, see \citep{AlGe90} for
an introduction, where (\ref{Lu}) is embedded in a family of problems
depending on a parameter $s$, i.e., \begin{equation}
\mathcal{F}(u;s)=0\label{eq:Fus}\end{equation}
where $\mathcal{F}(u;0)=\mathcal{F}(u)=0.$ Thus, solving (\ref{eq:Fus})
for $s=0$ is equivalent to solving (\ref{Lu}) and it is assumed
that solving (\ref{eq:Fus}) for some $s,$ say $s=1$, is computationally
cheap. The solution to (\ref{eq:Fus}) is found by solving a sequence
of problems for values of $s$ decreasing from $1$ to $0$. A general
package for continuation methods with additional references may be
found in Watson et al. \citep{WaSoMeMoWa97}. Further, see Nocedal
and Wright \citep{NoWr99} for a discussion in the context of optimization
and Ascher, Mattheij and Russell \citep{AsMaRu95} for boundary value
ODEs. DFPM can in principle be viewed as a sub-method to the group
of continuation methods. However, as far as the authors know, the
concrete application of a second order system (Hamiltonian dynamics)
to solve equations and the corresponding analysis as presented here
is novel.

Other works where (\ref{LuTime}) appear are for example the damped
harmonic oscillator in classical mechanics, the damped wave equation,
\citep{PaSq05} and the heavy ball with friction \citep{Al00}. These
problem settings are specific examples of physical systems and not
developed to solve equations in general.

In \citep{Ch88,Ch08} iterative processes to solve, e.g., eigenvalue
problems are considered as (gradient driven) dynamical systems. So
called fictitious time is used in \citep{TsLiYe10} where, e.g., Dirichlet
boundary value problem of quasilinear elliptic equation is numerically
solved by using the concept of fictitious time integration method.
The inverse problem of recovering a distributed parameter model is
considered in \citep{AsHuDo07} using the first order ODE attained
from the necessary optimality conditions of the inverse problem.

First order systems, mainly in the form $u_{t}=\mathcal{F}(u)$, have
been used to solve different kinds of equations $\mathcal{F}(v)=0$,
both as a general approach and intended for specific mathematical
problems. It is of interest to briefly consider the difference between
the first order differential equation $u_{t}=\mathcal{F}(u)$ and
the second order approach, DFPM. Suppose for simplicity that a discretization
is made by finite differences leading to a system of equations $\dot{u}_{i}=F_{i}(u_{1},\ldots,u_{n},\, x_{i})$.
Consider an example where the functional $\mathcal{F}(u)$ contains
a derivative w.r.t. \emph{u} (\emph{A}) and other functions of \emph{u}
and \emph{x} (\emph{B}). Then $F_{i}(u_{1},\ldots,u_{n},\, x_{i})=A_{i}(u_{1},\ldots,u_{n},\, x_{i})/h^{p}+B_{i}(u_{i},x_{i})$.
Dimensional analysis then gives that
$$\left[\dot{u}_{i}\right]=\frac{\left[u\right]}{\left[t\right]}=\left[A_{i}(u_{1},\ldots,u_{n},\, x_{i})/h^{p}\right]=\frac{\left[A\right]}{h^{p}}=\frac{\left[u\right]}{h^{p}}.$$
Given a certain component $F_{i}$ we have that $x_{i}$ is not variable,
so $\left[A\right]=\left[u\right]$. We see that the dimension of
time is related to the discretization, i.e., $t=\mathcal{O}\left(h^{p}\right)$.
In a similar way it can be shown that for the second order differential
equation (DFPM) one instead have that $t=\mathcal{O}\left(h^{p/2}\right)$.
In a numerical integration, the dimension of \emph{t} must still be
valid. This means that also the maximum timestep (for which a numerical
algorithm is stable) is given by $\triangle t_{max}=\mathcal{O}\left(h^{p}\right)$
for the first order case, and $\triangle t_{max}=\mathcal{O}\left(h^{p/2}\right)$
for the second order equation. Consider for example the case of a
central difference formula, i.e., \emph{p}=2. For the first order
differential equation we see that $\triangle t_{max}$ will have to
be very small as finer meshes are selected. This will lead to a less
efficient computational complexity for the first order system. The
complexity of DFPM will be further discussed in section \ref{sec:Helium}.

As we shall see, the DFPM algorithm seems attractive due to several
reasons. The most interesting points that will be studied are related
to computational complexity, easiness of implementation, Hamiltonian
dynamics and its relation to the total evolution time, stability and
cheapness of symplectic integration, exponential convergence and the
existence of potential energy.

\subsection{The outline of the paper}

The outline of the paper is based on illustrating the versatility
of  DFPM and to analyze the convergence aspects of the dynamical system
(\ref{DFPM}). In order to introduce the reader to DFPM, the damped
harmonic oscillator is revisited. DFPM clearly has a close relationship
to this type of classical system. It is then important to remind the
reader of the close connection to Hamiltonian dynamics in particular
the existence of a potential function. In such a case where the functional
is conservative any extreme value of its potential function is a solution
to the original problem (\ref{eq:Fi}). Specifically if the potential
has a minimum the solution of (\ref{DFPM}) will converge asymptotically.
This is dealt with in Section \ref{sec:Asymptotic-convergence}. A
Lyapunov function is applied to show asymptotic convergence in Section
\ref{sub:Using-the-Potential}. In Section \ref{sub:Linear-Convergence-Analysis}
we analyze the linearization of  DFPM and give precise statements
for local asymptotic convergence valid close to the solution and for
linear problems such as systems of linear equations. The rate of convergence
is treated in Section \ref{sec:Convergence-rate} with four subsections
treating the general problem (\ref{DFPM}) when there is a Lyapunov
function, the linearized problem, the choice of damping, and examples,
respectively. In Section \ref{sub:General-results-on} the Lyapunov
function is used to state a general theorem that together with additional
assumptions on the Lyapunov function gives an exponential convergence
rate. This theorem is then specialized to the case when there exists
a potential. The linearized problem is analyzed in Section \ref{sub:Convergence-rate-for}
where we first treat the case with one scalar damping and then discuss
the possibility to choose an optimal general damping matrix. The conclusions
drawn from the choice of damping in the linear case are used in Section
\ref{sub:Discussion-about-optimal} to formulate a local strategy
for the choice of optimal damping. To demonstrate the efficiency of
DFPM we report in the end of the article several examples. The most
noteworthy is the efficiency for treating symmetric eigenvalue problems.
It is shown that DFPM is order of magnitudes faster than the standard
software ARPACK \citep{website:arpack}. Finally, in Section \ref{sec:Conclusions-and-Future}
we make some conclusions, discuss open problems as well as suggestions
for future works.

\section{Two illuminating examples}

\subsection{The damped harmonic oscillator}

Despite its triviality an important example related to DFPM is the
damped harmonic oscillator where $F(u)=-ku,\, k>0$ which we include
for later reference since it illustrates many properties of (\ref{DFPM}).
 In this case the equation at hand is given by\begin{equation}
\mu\ddot{u}+\eta\dot{u}=-ku.\label{eq:harmonic oscillator}\end{equation}
In DFPM, as well as here, we set the inital condition $\dot{u}\left(0\right)=0$.
The initial condition for $u$ may be set arbitrary. In mechanics
the parameters $\mu>0$, $\eta>0$ and $k>0$ correspond to particle
mass, damping constant and spring constant, respectively. The time-dependent
solution is given explicitly by \[
u=c_{1}e^{-\gamma_{1}t}+c_{2}e^{-\gamma_{2}t}\]
where \begin{equation}
\gamma_{1,2}=-\frac{1}{2}\frac{\eta}{\mu}\pm\sqrt{\frac{1}{4}\frac{\eta^{2}}{\mu^{2}}-\frac{k}{\mu}}\label{eq:HarmRoots}\end{equation}
and $c_{1},c_{2}$ are constants given by the initial conditions.
Although in mechanics it is clear that all parameters in (\ref{eq:harmonic oscillator})
are physically positive, it is worthwhile to make a comment why this
is so. Consider the case $\mu<0,\eta<0$. The roots are then real
with one positive and one negative root so $u\left(t\right)$ will
diverge. The situation for $\mu<0,\eta>0$ is similar with one positive
real root and no convergence. When $\mu>0,\eta<0$ the roots may be
complex but one root will always have a positive real part and the
solution will not converge. Thus, the only possible choice for convergence
into a stationary solution is to apply positive parameters.

There are three different regimes of the parameters that will effect
the convergence: the under critical damping,$\eta<2\sqrt{k\mu}$ which
shows an oscillatory convergence, the critical damping, \foreignlanguage{swedish}{$\eta=2\sqrt{k\mu}$}
giving exponential convergence, and over critical damping, $\eta>2\sqrt{k\mu}$
resulting in a slower exponential convergence. The critical damped
system is known to be the fastest way for the system to return to
its equilibrium (i.e., the stationary solution) \citep{website:critical}.
It will be illustrative to return to this example later when considering
various aspects of convergence and convergence rate of DFPM in Sections
\ref{sec:Asymptotic-convergence} and \ref{sec:Convergence-rate}.

\subsection{A symmetric eigenvalue problem}

Symmetric eigenvalue problems are of great importance in physics and
technology. It is also a relatively straight forward example to illustrate
how the DFPM algorithm is applied. Consider the eigenvalue problem\begin{equation}
A\mathbf{v}=\lambda\mathbf{v}\label{eq:EigenvalueProblem}\end{equation}
where $A$ is a symmetric matrix with normalization $\left\Vert \mathbf{v}\right\Vert =1$.
The DFPM equation (\ref{DFPM}) is not directly applicable since the
eigenvalue $\lambda$ is unknown. However, it is well known, see \citep{GoVa96},
that the stationary solutions of the Rayleigh quotient\[
\rho(\mathbf{v})=\mathbf{v}^{T}A\mathbf{v},\:\left\Vert \mathbf{v}\right\Vert =1\]
are eigenvectors of (\ref{eq:EigenvalueProblem}). Specifically, we
have that \[
\arg\min\rho(\mathbf{v})=\lambda_{min}\]
where $\lambda_{min}$ is the smallest eigenvalue of $A$. Thus, one
way to formulate the functional vector $\mathbf{F}$ is

\[
\mathbf{F}=-A\mathbf{u}+\left(\mathbf{u}^{T}A\mathbf{u}\right)\mathbf{u},\:\left\Vert \mathbf{u}\right\Vert =1\]
where $\mathbf{u}=\mathbf{u}\left(t\right)$. The DFPM equation (\ref{DFPM})
is then given by \begin{equation}
M\ddot{\mathbf{u}}+N\dot{\mathbf{u}}=-A\mathbf{u}+\left(\mathbf{u}^{T}A\mathbf{u}\right)\mathbf{u},\:\left\Vert \mathbf{u}\right\Vert =1\label{eq:eigDFPM}\end{equation}
where $M=\textrm{diag}(\mu_{1},\ldots,\mu_{n}),\, N=\textrm{diag}(\eta_{1},\ldots,\eta_{n})$.
This procedure will yield $\lambda_{min}$ and its corresponding eigenvector
$\mathbf{u}$. We shall see later that by replacing $\mathbf{F}$
with $-\mathbf{F}$ we instead get $\lambda_{max}$ and its corresponding
eigenvector. There are various strategies to get the other solutions.
One possibility is to apply a Gram-Schmidt process \citep{bjor:94}.
Often in applications, only a few of the lowest solutions are of interest.
The reader should note that in practice the matrix $A$ never needs
to be formulated explicitly. In DFPM one instead works with the components
of the functional vector $F_{i}$. The mechanical interpretation is
of course that this is the force acting on particle \emph{i}. The
formulation therefore automatically becomes sparse. This is later
illustrated in Section \ref{sec:Helium}.

\section{The Potential and Total Energy\label{sec:Potential Energy}}

DFPM can be considered as a many-particle system in classical mechanics
where $F_{i}$ is the force acting on particle \emph{i}, $\mu_{i}$
is its point mass, and $-\eta_{i}\dot{u}_{i}$ is the damping force
\citep{Goldstein}. In this section we revisit the concept of a conservative
force field and thus the existence of a many-particle potential. In
DFPM the functional is not necessarily conservative, but if it is,
the analysis is greatly simplified. By using the results in this section,
we shall see in Section \ref{sub:Using-the-Potential} that for a
convex potential, the stationary solution to (\ref{DFPM}) corresponds
to a minimum of the many-particle potential.

We start by taking the view that (\ref{eq:Fi}) is a vector field
in $\mathbb{R}^{n}$: \begin{equation}
\mathbf{F}=(F_{1},F_{2},\ldots,F_{n}),\, F_{i}=F_{i}(u_{1},u_{2},\ldots,u_{n}).\label{eq:vecField}\end{equation}

\begin{defn}
\label{def:Conservative}The vector field $\mathbf{F}$ in (\ref{eq:vecField})
is \textit{conservative} if there exists a potential function $V:\mathbb{R}^{n}\rightarrow\mathbb{R}$
such that $\mathbf{F}=-\nabla V$.
\end{defn}
For any conservative field $\mathbf{F}$ we have that \[
\mathbf{F}(\mathbf{v})=0\Leftrightarrow\nabla V(\mathbf{v})=0.\]
Thus, any solution to (\ref{eq:Fi}) is an extreme value of the potential
$V$, i.e., a minimum, maximum or saddle point. In other words, solving
(\ref{eq:Fi}) by  DFPM is equivalent to finding the extreme points
of the potential $V$. We will explore this fact further when analyzing
the convergence of DFPM in Section \ref{sec:Asymptotic-convergence}.

By differentiating $\nabla V$ and assuming that $V$ is at least
twice continuously differentiable, we get

\begin{equation}
\frac{\partial F_{i}}{\partial u_{j}}-\frac{\partial F_{j}}{\partial u_{i}}=0,1\leq i,j\leq n.\label{eq:FiFj}\end{equation}
as a necessary and sufficient condition for $\mathbf{F}$ to be conservative.
Note that one good example fulfilling the condition (\ref{eq:FiFj})
is the force field $\mathbf{F}(\mathbf{u})=A\mathbf{u}$ where $A$
is a symmetric matrix. We shall see later that this fact is very useful
for symmetric eigenproblems.

It is possible to derive the condition (\ref{eq:FiFj}) that is interesting
in its own since it contains the possibility to consider equations
on manifolds. Consider the (work) $1$-form $\varphi=\sum F_{j}du_{j}$,
see \citep{HuHu09} for a definition of $k$-forms. The $1$-form $\varphi$
is said to be closed if $d\varphi=0$ and Poincarés Lemma \citep{Co93}
implies that any closed form is exact, i.e., in our context has a
potential, say $V,$ that is $dV=\varphi$. There is no ambiguity
to say that $V$ is a potential as in Definition \ref{def:Conservative}.
We have the following results for the vector field in (\ref{eq:vecField}).
\begin{thm}
The following statements are equivalent:

1. $\mathbf{F}$ is conservative, that is $\mathbf{F}=-\nabla V$

2. $dV=\varphi$

3. $\int_{\Gamma}\mathbf{F\cdot\mathrm{d\mathbf{r}}}$ is independent
of the path $\Gamma$

4. $\oint_{\Gamma}\mathbf{F}\cdot\mathrm{d}\mathbf{r}=0$ for all
closed paths $\Gamma$.\end{thm}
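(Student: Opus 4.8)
The plan is to establish the four-way equivalence by proving a short cycle of implications, relying throughout on the standing assumption that $\mathbf{F}$ is at least continuous on $\mathbb{R}^{n}$ so that the line integrals are well defined. I would dispose of $(1)\Leftrightarrow(2)$ first, since it is merely a change of notation with no analysis involved: the existence of $V$ with $\mathbf{F}=-\nabla V$, i.e. $F_{j}=-\partial V/\partial u_{j}$, is the same as the existence of the sign-reversed potential $W=-V$ satisfying $dW=\sum_{j}F_{j}\,du_{j}=\varphi$, so (1) and (2) assert the existence of one and the same object up to sign.

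Next I would prove $(1)\Rightarrow(3)$ by the gradient (fundamental) theorem for line integrals. Parametrizing an arbitrary path $\Gamma$ from $\mathbf{a}$ to $\mathbf{b}$ by $\mathbf{r}(s)$ and using the chain rule identity $\frac{d}{ds}V(\mathbf{r}(s))=\nabla V(\mathbf{r}(s))\cdot\mathbf{r}'(s)$, one obtains
\[
\int_{\Gamma}\mathbf{F}\cdot d\mathbf{r}=-\int\nabla V(\mathbf{r}(s))\cdot\mathbf{r}'(s)\,ds=V(\mathbf{a})-V(\mathbf{b}),
\]
which depends only on the endpoints. The equivalence $(3)\Leftrightarrow(4)$ is then a purely combinatorial path argument: two paths sharing endpoints can be concatenated (one traversed in reverse) into a closed loop, and conversely any closed loop can be split into two such paths; since reversing a path negates its integral, the vanishing of all loop integrals and dependence on the endpoints alone are the same statement.

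The substantive step, and the main obstacle, is the converse $(3)\Rightarrow(1)$, where the potential must be \emph{constructed} rather than assumed. Fixing a base point $\mathbf{a}$, I would define
\[
V(\mathbf{u})=-\int_{\mathbf{a}}^{\mathbf{u}}\mathbf{F}\cdot d\mathbf{r},
\]
the integral taken along any path from $\mathbf{a}$ to $\mathbf{u}$; path independence (statement (3)) is precisely what makes this definition unambiguous. It then remains to verify $\partial V/\partial u_{i}=-F_{i}$ for each $i$. The idea is to evaluate the difference quotient using the path that reaches $\mathbf{u}+he_{i}$ by first going to $\mathbf{u}$ and then traversing the straight segment in the $e_{i}$ direction, giving
\[
\frac{V(\mathbf{u}+he_{i})-V(\mathbf{u})}{h}=-\frac{1}{h}\int_{0}^{h}F_{i}(\mathbf{u}+se_{i})\,ds\longrightarrow -F_{i}(\mathbf{u})\qquad(h\to 0),
\]
the limit following from continuity of $F_{i}$ and the fundamental theorem of calculus. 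This yields $\mathbf{F}=-\nabla V$, closing the cycle. The only delicate points are that the domain be connected, so that every $\mathbf{u}$ is reachable from $\mathbf{a}$ (automatic on $\mathbb{R}^{n}$), and that the regularity of $\mathbf{F}$ suffice for the limit above; both are mild. The cited Poincar\'e lemma is what governs the complementary route from closedness of $\varphi$ to its exactness, underlying the local criterion~(\ref{eq:FiFj}).
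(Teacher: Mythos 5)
Your proof is correct, and it is substantially more self-contained than the paper's. The paper's own proof explicitly argues only the equivalence $(3)\Leftrightarrow(4)$ --- by the same concatenation-and-reversal device you use --- while declaring $(1)\Leftrightarrow(2)$ trivial and delegating the entire equivalence $(1)\Leftrightarrow(3)$ to the cited reference \citep{HuHu09}. You instead supply the analytic content in full: the gradient theorem for $(1)\Rightarrow(3)$, and, for the substantive converse $(3)\Rightarrow(1)$, the standard construction $V(\mathbf{u})=-\int_{\mathbf{a}}^{\mathbf{u}}\mathbf{F}\cdot\mathrm{d}\mathbf{r}$ made unambiguous by path independence, followed by the axis-parallel difference-quotient argument; you also correctly isolate the two hypotheses that make this work (connectedness of the domain, automatic in $\mathbb{R}^{n}$, and continuity of $\mathbf{F}$). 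A further point in your favour: in handling $(1)\Leftrightarrow(2)$ you quietly repair a sign issue the paper glosses over. Since $\varphi=\sum F_{j}\,du_{j}$, a function with $dV=\varphi$ satisfies $\nabla V=+\mathbf{F}$, so it is $-V$, not $V$ itself, that is the potential of Definition \ref{def:Conservative}; the paper's assertion that ``there is no ambiguity'' is true only up to the sign flip $W=-V$ that you make explicit. What the paper's terser route buys is brevity and a pointer to the literature; what yours buys is a complete cycle of implications in which the only genuinely nontrivial step --- constructing the potential from path independence --- is actually carried out.
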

\begin{proof}
The equivalence of 1 and 2 is trivial. The equivalence of 1 and 3
can be found in \citep{HuHu09}. Now, assume that 3. is true. Let $p$
and $q$ be two arbitrary points, and let $\gamma_{1}$ and $\gamma_{2}$
be two piecewise smooth paths from $p$ to $q$. Define $\Gamma$
as the closed path which first goes from $p$ to $q$, via $\gamma_{1}$,
and then from $q$ to $p$ via $-\gamma_{2}$. Then, since $\int_{\gamma_{1}}\mathbf{F\cdot\mathrm{d\mathbf{r=\int_{\gamma_{2}}\mathbf{F}\mathrm{\cdot d\mathbf{r}}}}}$,
we get that

\[
0=\int_{\gamma_{1}}\mathbf{F\cdot\mathrm{d\mathbf{r}-}\int_{\gamma_{2}}\mathbf{F}\cdot\mathrm{d\mathbf{r}=}\int_{\Gamma}\mathbf{F}\cdot\mathrm{d\mathbf{r}}}.\]

Since $p$ and $q$ are arbitrary points, and $\gamma_{1}$ and $\gamma_{2}$
are arbitrary, the closed path $\Gamma$ is arbitrary, and therefore
the implication 3 to 4 is proved. The implication 4 to 3 is similar.
\end{proof}
If a potential exists it can be derived from the discretized equations
by calculating the work, $W$, simply integrating along any path,
say, from $\mathbf{0}$ to $\mathbf{u}$. For example it is possible
to use coordinate directions as\begin{eqnarray}
W & = & \intop_{0}^{u_{1}}F_{1}(s_{1,}0,\ldots,0)ds_{1}+\intop_{0}^{u_{2}}F_{2}(u_{1,}s_{2},0,\ldots,0)ds_{2}+\ldots\label{eq:W}\\
 &  & +\intop_{0}^{u_{n}}F_{n}(u_{1},u_{2},\ldots,u_{n-1},s_{n})ds_{n}=-V.\nonumber \end{eqnarray}

\subsubsection{A revisit to the symmetric eigenvalue problem\label{sub:The-Quantum-Harmonic}}

Recall that DFPM equation for the symmetric eigenvalue problem (\ref{eq:eigDFPM}).
The corresponding vector field \[
\mathbf{F}=-A\mathbf{u}+\left(\mathbf{u}^{T}A\mathbf{u}\right)\mathbf{u},\:\left\Vert \mathbf{u}\right\Vert =1\]
is conservative with the potential \begin{equation}
V(\mathbf{u})=\frac{1}{2}\mathbf{u}^{T}A\mathbf{u},\:\left\Vert \mathbf{u}\right\Vert =1\label{eq:EigPot}\end{equation}
To prove this it would at a first glance seem natural to find the
gradient of $V(\mathbf{u})$. However, the normalization $\left\Vert \mathbf{u}\right\Vert =1$
complicates this somewhat. This can be treated by investigate the
gradient on the sphere $S^{n-1}=\left\{ \mathbf{u}\in\mathbb{R}^{n}:,\:\left\Vert \mathbf{u}\right\Vert =1\right\} $.
Denote the tangent space to the sphere at a point $\mathbf{\mathbf{u}}$
as $T_{\mathbf{\mathbf{u}}}(S^{n-1})$. By using the Euclidean metric
(the 2-norm), the gradient of $V$ at $\mathbf{u}$ is the unique
vector $\nabla_{S^{n-1}}V(\mathbf{u})\in T_{\mathbf{\mathbf{u}}}(S^{n-1})$
such that\[
\nabla V(\mathbf{u})^{T}\mathbf{t}=\nabla_{S^{n-1}}V(\mathbf{u})^{T}\mathbf{t}\]
for all tangent vectors \foreignlanguage{swedish}{$\mathbf{t}\in T_{\mathbf{\mathbf{u}}}(S^{n-1})$}
where $\nabla V(\mathbf{u})$ is the usual gradient in $\mathbb{R}^{n}$.
Solving this equation for $\nabla_{S^{n-1}}V(\mathbf{u})$ by realizing
that $\nabla_{S^{n-1}}V(\mathbf{u})$ is the projection of $\nabla V$
on $T_{\mathbf{\mathbf{u}}}(S^{n-1})$ we get\[
\nabla_{S^{n-1}}V(\mathbf{u})=(I-\mathbf{\mathbf{n}(\mathbf{u})}\mathbf{n}(\mathbf{u})^{T})\nabla V(\mathbf{u})=\nabla V(\mathbf{u})-\left(\mathbf{n}(\mathbf{u})^{T}\nabla V(\mathbf{u})\right)\mathbf{n}(\mathbf{u})\]
where $\mathbf{n}(\mathbf{u})$ is the normal to $T_{\mathbf{u}}(S^{n-1})$.
Since, for $S^{n-1}$ we have $\mathbf{n}(\mathbf{u})=\mathbf{u}$
and we get \[
\nabla_{S^{n-1}}V(\mathbf{u})=A\mathbf{u}-\left(\mathbf{u}^{T}A\mathbf{u}\right)\mathbf{u}=-\mathbf{F}(\mathbf{u})\]
showing that $V$ in fact is a potential to the vector field $\mathbf{F}$.

\section{Asymptotic convergence\label{sec:Asymptotic-convergence}}

In this section we investigate the convergence properties of the solution
$\mathbf{u}(t)$ given by (\ref{DFPM}). Since we are interested in
the asymptotic solution we will use stability theory for dynamical
systems, see \citep{MiHoLi08}, namely the use of a Lyapunov function
and local linear analysis. However, it is generally difficult to find
a Lyapunov function. Consequently, we start with the case where there
exists a potential and where the Lyapunov function can be chosen as
the total energy. If no potential exists we are left with the linear
stability analysis in Section \ref{sub:Linear-Convergence-Analysis}.

For simplicity and without loss of generality we may assume that $\mu_{i}\equiv1$
and that the solution of (\ref{eq:Fi}) is $\hat{\mathbf{u}}=0$.

\subsection{Using the Potential\label{sub:Using-the-Potential}}

In this section we assume that there exists a potential, $V(\mathbf{u})$,
to the given equations in (\ref{eq:Fi}). Then (\ref{DFPM}) may be
written as\begin{equation}
\ddot{\mathbf{u}}+N\dot{\mathbf{u}}=-\nabla V\label{eq:gradDFPM}\end{equation}
where $N=\textrm{diag}(\eta_{1},\ldots,\eta_{n})$. The energy functional
(the Lyapunov function) is given by
\begin{equation}
E=T+V\label{eq:E}
\end{equation}
where
$$T=\frac{1}{2}\sum\dot{u}_{i}^{2}$$
is the kinetic energy. We then have the following important result
to be used in the analysis of the asymptotic convergence analysis.
\begin{lem}
\label{lem:dEdt} Assume that $\eta_{i}>0,\, i=1,\ldots,n$. For the
given solution of the dynamical system (\ref{DFPM}) the energy functional
defined in (\ref{eq:E}) is a non-increasing function.\end{lem}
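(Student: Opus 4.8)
The plan is to show that $dE/dt \leq 0$ by differentiating the energy functional along a trajectory of (\ref{eq:gradDFPM}) and then using the equation of motion to cancel the potential terms, leaving only a manifestly nonpositive damping contribution. This is the standard energy-dissipation argument for a damped Hamiltonian system, and with $\mu_i \equiv 1$ the computation is especially clean.

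First I would differentiate $E = T + V$ with respect to $t$, treating $\mathbf{u} = \mathbf{u}(t)$ as a curve. For the kinetic part,
$$\frac{dT}{dt} = \frac{d}{dt}\,\frac{1}{2}\sum_{i=1}^{n}\dot{u}_i^2 = \sum_{i=1}^{n}\dot{u}_i\ddot{u}_i = \dot{\mathbf{u}}^T\ddot{\mathbf{u}}.$$
For the potential part, the chain rule gives
$$\frac{dV}{dt} = \sum_{i=1}^{n}\frac{\partial V}{\partial u_i}\,\dot{u}_i = (\nabla V)^T\dot{\mathbf{u}}.$$

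Next I would substitute the dynamics. Rewriting (\ref{eq:gradDFPM}) as $\ddot{\mathbf{u}} = -N\dot{\mathbf{u}} - \nabla V$ and inserting this into the expression for $dT/dt$ yields $\dot{\mathbf{u}}^T\ddot{\mathbf{u}} = -\dot{\mathbf{u}}^T N\dot{\mathbf{u}} - \dot{\mathbf{u}}^T\nabla V$. Adding the potential contribution $(\nabla V)^T\dot{\mathbf{u}} = \dot{\mathbf{u}}^T\nabla V$, the two gradient terms cancel, so that
$$\frac{dE}{dt} = -\dot{\mathbf{u}}^T N\dot{\mathbf{u}} = -\sum_{i=1}^{n}\eta_i\dot{u}_i^2.$$
Since $N = \textrm{diag}(\eta_1,\ldots,\eta_n)$ with each $\eta_i > 0$, the quadratic form $\dot{\mathbf{u}}^T N\dot{\mathbf{u}}$ is nonnegative, hence $dE/dt \leq 0$ and $E$ is non-increasing.

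There is no genuine obstacle here; the only points requiring care are the correct application of the chain rule for $V$ along the trajectory and the sign bookkeeping that makes the gradient terms cancel exactly. It is worth noting that equality $dE/dt = 0$ holds precisely when $\dot{\mathbf{u}} = \mathbf{0}$, which is exactly the feature that will later allow monotonicity of $E$ to be upgraded to asymptotic convergence via a LaSalle-type invariance argument in Section \ref{sub:Using-the-Potential}.
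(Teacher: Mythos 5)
Your proposal is correct and follows essentially the same route as the paper's proof: differentiate $E=T+V$ along the trajectory, substitute the equation of motion (the paper writes $F_{i}=\ddot{u}_{i}+\eta_{i}\dot{u}_{i}$ where you write $\ddot{\mathbf{u}}=-N\dot{\mathbf{u}}-\nabla V$, an equivalent substitution), and observe that the gradient terms cancel, leaving $dE/dt=-\sum_{i}\eta_{i}\dot{u}_{i}^{2}\leq0$. Your closing remark that $dE/dt=0$ exactly when $\dot{\mathbf{u}}=0$ is a correct and useful observation that the paper indeed exploits later in the proof of Theorem \ref{thm:ConvDFPMPotential}.
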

\begin{proof}
From the definition of $E$ in (\ref{eq:E}) and $\nabla V=-\mathbf{F}$
we have
$$ \frac{dE}{dt}=\frac{dT}{dt}+\frac{dV}{dt}=\sum\dot{u}_{i}\ddot{u}_{i}+\frac{\partial V}{\partial u_{i}}\dot{u}_{i}=\sum\dot{u}_{i}\ddot{u}_{i}-F_{i}\dot{u}_{i}$$
and since $F_{i}=\ddot{u}_{i}+\eta_{i}\dot{u_{i}}$ we get
\begin{equation}
\frac{dE}{dt}=-\sum_{i}\eta_{i}\dot{u}_{i}^{2}\leq0.\label{eq:dEdt}
\end{equation}
\end{proof}

Lemma \ref{lem:dEdt} tells us that the energy is non-increasing which
is not surprising from a mechanical point of view since the damping
will decrease the total amount of energy and there are no additional
sources of energy in the system.

The next theorem is taken from \citep{MiHoLi08}. The proof is omitted.
\begin{thm}
\label{thm:MHLTheorem}Consider the autonomous system\begin{equation}
\dot{\mathbf{w}}=G(\mathbf{w})\label{eq:wpGw}\end{equation}
 where $G:\Omega\to\mathbb{R}^{n}$ is a continuous function defined
on a domain $\Omega$ in $\mathbb{R}^{n}$containing the origin and
$G(0)=0$. Assume that the Lyapunov function $L$ is non-negative
with respect to (\ref{eq:wpGw}) for all $\mathbf{w}\in\Omega$ and
such that for some constant $c\in\mathbb{R}$ the set $H_{c}$ is
a closed and bounded component of the set $\{\mathbf{w}\in\Omega:L(\mathbf{w})\leq c\}$.
Let $M$ be the largest invariant set in the set\[
Z=\left\{ \mathbf{w}\in\Omega:\frac{dL}{dt}(\mathbf{w})=0\right\} .\]
Then every solution $\mathbf{w}(t)$ of the autonomous system (\ref{eq:wpGw})
with $\mathbf{w}(t_{0})\in H_{c}$ approaches the set $M$ as $t\to\infty$.
\end{thm}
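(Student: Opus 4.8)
The plan is to reproduce the classical LaSalle invariance argument, organised around the $\omega$-limit set of the trajectory. I will use the property implicit in calling $L$ a Lyapunov function with respect to (\ref{eq:wpGw}), namely that $L$ is non-increasing along solutions, so that $\frac{dL}{dt}(\mathbf{w}(t)) = \nabla L(\mathbf{w}(t))^{T} G(\mathbf{w}(t)) \le 0$ wherever the trajectory runs. Fix a solution $\mathbf{w}(t)$ with $\mathbf{w}(t_{0}) \in H_{c}$. First I would show that the trajectory is trapped in $H_{c}$: since $L$ is non-increasing along it, $L(\mathbf{w}(t)) \le L(\mathbf{w}(t_{0})) \le c$, so $\mathbf{w}(t)$ stays in the sublevel set $\{\mathbf{w}\in\Omega : L(\mathbf{w}) \le c\}$; being continuous and starting in the particular component $H_{c}$, it cannot jump to another component and therefore remains in $H_{c}$. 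As $H_{c}$ is closed and bounded, hence compact, the solution is bounded and thus extends to all $t \ge t_{0}$ with no finite-time escape.

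Next I would extract the limiting value of $L$ and pass to the limit set. The scalar map $t \mapsto L(\mathbf{w}(t))$ is non-increasing and bounded below by $0$ (here non-negativity of $L$ enters), so it converges to some $L^{\ast} \ge 0$. Let $\omega^{+}$ denote the $\omega$-limit set of the orbit. Because the orbit lies in the compact set $H_{c}$, the set $\omega^{+}$ is nonempty and compact, and the trajectory approaches it in the sense that $\operatorname{dist}(\mathbf{w}(t),\omega^{+}) \to 0$ as $t \to \infty$. For any $\mathbf{p} \in \omega^{+}$ there is a sequence $t_{n} \to \infty$ with $\mathbf{w}(t_{n}) \to \mathbf{p}$, whence continuity of $L$ gives $L(\mathbf{p}) = \lim_{n} L(\mathbf{w}(t_{n})) = L^{\ast}$; thus $L \equiv L^{\ast}$ on $\omega^{+}$.

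Finally I would locate $\omega^{+}$ inside $M$. The crucial point is that $\omega^{+}$ is \emph{invariant} under the flow: exploiting the autonomous structure, the time-shifted trajectories $t \mapsto \mathbf{w}(t + t_{n})$ converge, after passing to a subsequence, to a full solution through $\mathbf{p}$ that stays in $\omega^{+}$. Since $L$ is constant on the invariant set $\omega^{+}$, differentiating along any solution contained in it gives $\frac{dL}{dt} = 0$ there, so $\omega^{+} \subseteq Z$. As $\omega^{+}$ is then an invariant subset of $Z$ and $M$ is by definition the largest such set, we get $\omega^{+} \subseteq M$; combining this with $\operatorname{dist}(\mathbf{w}(t),\omega^{+}) \to 0$ yields $\mathbf{w}(t) \to M$ as $t \to \infty$.

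The step I expect to be the main obstacle is the invariance of the $\omega$-limit set, since $G$ is assumed only continuous and not Lipschitz, so solutions need not be unique and the usual continuous-dependence argument is unavailable. The way around it is a compactness argument: the shifted orbits $\mathbf{w}(\cdot + t_{n})$ are uniformly bounded (they live in $H_{c}$) and equicontinuous on compact time intervals (their velocities $G(\mathbf{w})$ are bounded on the compact set $H_{c}$), so Arzel\`a--Ascoli yields a subsequence converging uniformly on compacta to a limit curve, which one then checks is itself a solution of (\ref{eq:wpGw}) lying in $\omega^{+}$; this is what makes the invariance go through at the stated level of generality.
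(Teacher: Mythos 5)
The paper offers no proof to compare against: Theorem \ref{thm:MHLTheorem} is imported verbatim from the reference \citep{MiHoLi08} with the words ``the proof is omitted,'' so your proposal is supplying an argument the authors deliberately left out. That said, your proof is correct and is the classical LaSalle invariance-principle argument, which is precisely what underlies the cited result. The trapping step is sound as you phrase it: the trajectory image over $[t_{0},t]$ is a connected subset of the sublevel set $\{L\leq c\}$ meeting $H_{c}$, hence lies in $H_{c}$ by maximality of connected components, and compactness of $H_{c}$ together with boundedness of $G$ on it rules out finite-time escape from $\Omega$. The monotone convergence of $L$ along the orbit, the constancy $L\equiv L^{\ast}$ on the $\omega$-limit set $\omega^{+}$, and the chain $\omega^{+}\subseteq Z$, hence $\omega^{+}\subseteq M$, hence $\operatorname{dist}(\mathbf{w}(t),M)\to0$, all follow the standard template. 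Most creditably, you isolated the one genuinely delicate point at this level of generality---$G$ is only continuous, so solutions need not be unique and invariance of $\omega^{+}$ cannot be obtained from continuous dependence on initial data---and your Arzel\`a--Ascoli remedy (shifted orbits uniformly bounded in $H_{c}$, velocities bounded by $\sup_{H_{c}}\|G\|$, a subsequential limit verified to solve the integral equation $\mathbf{w}^{\ast}(t)=\mathbf{w}^{\ast}(0)+\int_{0}^{t}G(\mathbf{w}^{\ast}(s))\,ds$) is exactly the device used in non-Lipschitz versions of LaSalle's theorem such as the one in \citep{MiHoLi08}. Two small caveats, neither fatal: what you obtain is \emph{weak} invariance of $\omega^{+}$ (through each point there passes at least one complete solution remaining in $\omega^{+}$), which is the sense in which ``largest invariant set'' must be read when uniqueness fails, so you should say so explicitly when invoking maximality of $M$; and your identity $\frac{dL}{dt}(\mathbf{w})=\nabla L(\mathbf{w})^{T}G(\mathbf{w})$ tacitly assumes $L\in C^{1}$, which is consistent with the theorem treating $dL/dt$ as a function of the point $\mathbf{w}$ but deserves a word, since the hypothesis as stated only calls $L$ a non-negative Lyapunov function.
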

Using Theorem \ref{thm:MHLTheorem} we are now able to show our main
result in this section for the asymptotic convergence of (\ref{eq:gradDFPM}).
\begin{thm}
\label{thm:ConvDFPMPotential}Assume that there exists a solution,
$\hat{\mathbf{u}}$, to (\ref{eq:Fi}) and a potential $V$ to $\mathbf{F}$.
If $V$ is globally convex, i.e., convex in the whole of $\mathbb{R}^{n}$,
then for any initial starting point in $\mathbb{R}^{n}$ the solution
of (\ref{eq:gradDFPM}) will be asymptotically convergent to $\hat{\mathbf{u}}$.
If $V$ is locally convex in a neighbourhood $\Upsilon$ of $\hat{\mathbf{u}}$
then for any initial starting point in $\Upsilon$ the solution of
(\ref{eq:gradDFPM}) will be asymptotically convergent to $\hat{\mathbf{u}}$. \end{thm}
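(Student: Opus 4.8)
The plan is to recast the second-order system (\ref{eq:gradDFPM}) as a first-order autonomous system on $\mathbb{R}^{2n}$ and then invoke the invariance principle of Theorem \ref{thm:MHLTheorem} with the total energy playing the role of the Lyapunov function. Setting $\mathbf{w}=(\mathbf{u},\mathbf{p})$ with $\mathbf{p}=\dot{\mathbf{u}}$, equation (\ref{eq:gradDFPM}) becomes
\[
\dot{\mathbf{u}}=\mathbf{p},\qquad \dot{\mathbf{p}}=-N\mathbf{p}-\nabla V(\mathbf{u}),
\]
which is of the form $\dot{\mathbf{w}}=G(\mathbf{w})$ with $G$ continuous and, after the normalization $\hat{\mathbf{u}}=0$ together with the harmless choice $V(\hat{\mathbf{u}})=0$, satisfying $G(\mathbf{0})=\mathbf{0}$. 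As Lyapunov function I would take $L(\mathbf{w})=E=\tfrac12\lvert\mathbf{p}\rvert^{2}+V(\mathbf{u})$ from (\ref{eq:E}). Convexity of $V$ forces the critical point $\hat{\mathbf{u}}$ to be a global minimizer, so $V\ge 0$ and hence $L\ge 0$, while Lemma \ref{lem:dEdt} supplies $dL/dt=-\sum_i\eta_i\dot u_i^{2}\le 0$; thus $L$ is a genuine Lyapunov function for the reformulated system.

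Next I would compute $Z=\{dL/dt=0\}$. Because every $\eta_i>0$, identity (\ref{eq:dEdt}) shows that $dL/dt=0$ is equivalent to $\mathbf{p}=\dot{\mathbf{u}}=0$, so $Z=\{(\mathbf{u},0)\}$. To locate the largest invariant set $M\subseteq Z$, suppose a trajectory stays in $Z$: then $\dot{\mathbf{u}}\equiv 0$, hence $\mathbf{u}$ is constant and $\ddot{\mathbf{u}}\equiv 0$, and substituting into (\ref{eq:gradDFPM}) gives $\nabla V(\mathbf{u})=0$. Since a critical point of a convex $V$ is a global minimizer, this confines $\mathbf{u}$ to the minimizing set, so $M=\{(\hat{\mathbf{u}},0)\}$ once that minimizer is unique. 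Theorem \ref{thm:MHLTheorem} then yields that every trajectory starting in a suitable closed bounded component $H_c$ of $\{L\le c\}$ approaches $M$, i.e. $\mathbf{u}(t)\to\hat{\mathbf{u}}$ and $\dot{\mathbf{u}}(t)\to 0$, which is exactly the claimed asymptotic convergence. For the local statement I would instead fix $c$ small enough that the component of $\{L\le c\}$ through $(\hat{\mathbf{u}},0)$ lies inside $\Upsilon\times\mathbb{R}^{n}$ and rerun the same argument there.

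The two steps I expect to be delicate are precisely the hypotheses of Theorem \ref{thm:MHLTheorem} that convexity does not obviously supply. First, the existence of a \emph{closed and bounded} component $H_c$ requires coercivity of $E$, equivalently of $V$, in the global case; plain convexity allows flat directions along which $V$ stays bounded, so some properness of $V$ (or a positive definite Hessian at $\hat{\mathbf{u}}$) must be invoked — in the local case this is arranged by shrinking $c$. Second, collapsing $M$ to the single point $\{(\hat{\mathbf{u}},0)\}$ needs the minimizer to be unique, which follows from \emph{strict} convexity, or from the standing hypothesis that $\hat{\mathbf{u}}$ is the only solution of (\ref{eq:Fi}); under mere convexity LaSalle delivers only convergence to the set of minimizers. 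Pinning down these two points is the crux of the argument, the reformulation and the energy identity being otherwise routine.
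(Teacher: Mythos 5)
Your proposal follows essentially the same route as the paper's proof: rewrite (\ref{eq:gradDFPM}) as the first-order system (\ref{eq:DynSyst}), take the total energy $E$ from (\ref{eq:E}) as the Lyapunov function, use Lemma \ref{lem:dEdt} to get $dE/dt=-\mathbf{v}^{T}N\mathbf{v}\leq 0$ so that $Z=\{\mathbf{v}=0\}$, and invoke the invariance principle of Theorem \ref{thm:MHLTheorem}. In fact your treatment of the invariant set is more careful than the paper's: the paper simply sets $M=Z$ and then argues after the fact that a point with $\mathbf{u}\neq\hat{\mathbf{u}}$ cannot remain there because $\dot{\mathbf{v}}=-\nabla V\neq 0$, whereas you correctly compute $M$ as the largest invariant subset of $Z$, namely the critical points of $V$ paired with $\mathbf{v}=0$. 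The two caveats you flag are genuine and are equally unaddressed in the paper's own proof: (i) Theorem \ref{thm:MHLTheorem} requires a closed and \emph{bounded} component $H_{c}$ of the sublevel set, and plain convexity of $V$ does not yield coercivity of $E$ (e.g.\ $V(u_{1},u_{2})=u_{1}^{2}$ is convex with unbounded sublevel sets), so some properness of $V$ is tacitly assumed in the global statement, while the local statement is rescued by shrinking $c$ exactly as you describe; (ii) without strict convexity or a uniqueness assumption on the solution of (\ref{eq:Fi}), $\nabla V$ may vanish on a whole set of minimizers and LaSalle delivers only convergence to that set --- the paper's step ``$\dot{\mathbf{v}}=-\nabla_{\mathbf{u}}V\neq 0$ if $\mathbf{u}\neq\hat{\mathbf{u}}$'' silently assumes $\hat{\mathbf{u}}$ is the unique critical point. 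So your argument is correct modulo the hypotheses you state explicitly, and it is, if anything, a more honest rendering of the paper's proof rather than a different one.
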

\begin{proof}
Rewrite  DFPM (\ref{eq:gradDFPM}) as a system by letting $\mathbf{v}=\mathbf{\dot{u}}$
as \begin{equation}
\left[\begin{array}{c}
\dot{\mathbf{u}}\\
\dot{\mathbf{v}}\end{array}\right]=\left[\begin{array}{c}
\mathbf{v}\\
-N\mathbf{v}-\nabla_{\mathbf{u}}V\end{array}\right]\label{eq:DynSyst}\end{equation}
We want to use Theorem \ref{thm:MHLTheorem} to prove asymptotic convergence
of DFPM. From Lemma \ref{lem:dEdt} we have that $dE/dt=-\mathbf{v}^{T}N\mathbf{v}\leq0$
and therefore $dE/dt=0$ if and only if $\mathbf{v}=0.$ Define\[
\mathbf{w}=\left[\begin{array}{c}
\mathbf{u}\\
\mathbf{v}\end{array}\right].\]
 Let $M=Z=\{\mathbf{u}:dE/dt=0\}=\{\mathbf{w}:\mathbf{v}=0\}$ be
the invariant set in Theorem \ref{thm:MHLTheorem}. Then, by Theorem
\ref{thm:MHLTheorem} again, the solution $\mathbf{w}$ to the system
(\ref{eq:DynSyst}) approaches the set $M$ as $t\to\infty.$ If $\mathbf{w}\in M$
then $\mathbf{v}=\dot{\mathbf{u}}=0$, so from (\ref{eq:DynSyst})
we have that $\dot{\mathbf{v}}=-\nabla_{\mathbf{u}}V\neq0$ if $\mathbf{u}\neq0=\hat{\mathbf{u}}$
and $\mathbf{w}$ can not remain in the set $M$ if $\mathbf{u}\neq\hat{\mathbf{u}}$.
We need to verify that $\mathbf{u}=0$ as $t\to\infty,$ but this
follows from the fact that $E$ is non-increasing. Hence $\mathbf{w}\to0$
as $t\to\infty.$
\end{proof}

\subsection{Linear Convergence Analysis\label{sub:Linear-Convergence-Analysis}}

Without a Lyapunov function and with no existing potential one is
left with a local linear stability analysis. Such an analysis is based
on the linearization of the dynamical system (\ref{DFPM}) at a solution
, $\hat{\mathbf{u}}=0$, to (\ref{eq:Fi}). Define $J(\mathbf{u})$
as the Jacobian of $\mathbf{F}$. From the Taylor expansion $\mathbf{F}(\mathbf{u})=\mathbf{F}(0)+J(0)\mathbf{u}+\mathcal{O}(\left\Vert \mathbf{u}^{2}\right\Vert )$
we define the linearized problem to (\ref{DFPM}) as
 \begin{equation}
\hat{M}\ddot{\mathbf{u}}+\hat{N}\dot{\mathbf{u}}=\mathbf{F}(0)+J(0)\mathbf{u}\label{eq:linFJu}
\end{equation}
where
$$
\hat{M}\textrm{=diag}\mathit{\mathrm{(\hat{\mu}_{1},\ldots,\hat{\mu}_{m})}},\,
\hat{N}\textrm{=diag}\mathit{\mathrm{(\hat{\eta}_{1},\ldots,\hat{\eta}_{m})}}$$
and $\hat{\mu}_{i},\hat{\eta}_{i}$ are the values of $\mu,\eta$
at $\hat{\mathbf{u}}$. Thus, the local convergence can be analyzed
by analyzing the linear system (\ref{eq:linFJu}) which for notational
convenience is written
\begin{equation}
M\ddot{\mathbf{u}}+N\dot{\mathbf{u}}+A\mathbf{u}=b\label{LinDyn}\end{equation}
where $M,N,A\in\mathfrak{\mathbb{R}}^{n\times n},\mathbf{b}\in\mathbb{R}^{n}$.
In \citep{DiYe99} sufficient conditions are given for (\ref{LinDyn})
to have asymptotically stable solutions. In order to state these conditions
we need some additional notation. Consider the homogeneous equation,
i.e., \begin{equation}
M\ddot{\mathbf{u}}+N\dot{\mathbf{u}}+A\mathbf{u}=0.\label{HomLin}\end{equation}
By inserting the eigensolution $e^{\xi_{i}t}\mathbf{v}_{i}$ into
(\ref{HomLin}) we get the equation \begin{equation}
(\xi_{i}^{2}M+\xi_{i}N+A)\mathbf{v}_{i}=0\label{charpol}\end{equation}
for the eigenvalue $\xi_{i}$ and eigenvector $\mathbf{v}_{i}$ of
$A$. Let $\mathrm{Re}(\mathbf{v}_{i})$ , $\mathrm{Im}(\mathbf{v}_{i})$
denote the real and imaginary part of $\mathbf{v}_{i}$, respectively.
Introduce the symmetric and anti-symmetric parts of $M,N,A$ as $M_{S},M_{A},N_{S},...$.
and define \[
s_{i}(M)=\mathrm{Re}(\mathbf{v}_{i})^{T}M_{S}\mathrm{Re}(\mathbf{v}_{i})+\mathrm{Im}(\mathbf{v}_{i})^{T}M_{S}\mathrm{Im}(\mathbf{v}_{i}),\, a_{i}(M)=2\mathrm{Re}(\mathbf{v}_{i})^{T}M_{A}\mathrm{Im}(\mathbf{v}_{i})\]
with corresponding definitions for $s_{i}(N),a_{i}(N),s_{i}(A),a_{i}(A)$.
By applying the general Hurwitz criterion, see e.g. \citep{Ga59},
to (\ref{charpol}) we get the following theorem and corollary. For
a detailed presentation of the proofs we refer to \citep{DiYe99}.
\begin{thm}
\textsl{\label{thm:AsymLinear}The solution to (\ref{LinDyn}) will
converge asymptotically if and only if
$$ s_{i}(M)s_{i}(N)+a_{i}(M)a_{i}(N)>0$$
and}
\begin{align*}
\left(s_{i}(N)s_{i}(A)+a_{i}(N)a_{i}(A)\right)\left(s_{i}(M)s_{i}(N)+a_{i}(M)a_{i}(N)\right) & -\\
-\left(s_{i}(M)a_{i}(A)-a_{i}(M)s_{i}(A)\right)^{2} & >0\end{align*}
\end{thm}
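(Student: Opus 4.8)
The plan is to reduce the matrix problem (\ref{charpol}) to a family of scalar quadratic equations with complex coefficients, one per eigenvector $\mathbf{v}_i$, and then to locate the roots of each such quadratic. Asymptotic convergence of (\ref{LinDyn}) is equivalent to every eigenvalue $\xi_i$ arising in (\ref{charpol}) having strictly negative real part, so the whole argument is a Hurwitz-type root-location analysis carried out mode by mode.

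First I would left-multiply (\ref{charpol}) by the conjugate transpose $\mathbf{v}_i^{*}$ to obtain the scalar identity $\xi_i^{2}\,(\mathbf{v}_i^{*}M\mathbf{v}_i)+\xi_i\,(\mathbf{v}_i^{*}N\mathbf{v}_i)+(\mathbf{v}_i^{*}A\mathbf{v}_i)=0$. The bridge to the quantities in the statement is the identity $\mathbf{v}_i^{*}M\mathbf{v}_i=s_i(M)+\mathrm{i}\,a_i(M)$, and likewise for $N$ and $A$: writing $\mathbf{v}_i=\mathrm{Re}(\mathbf{v}_i)+\mathrm{i}\,\mathrm{Im}(\mathbf{v}_i)$ and $M=M_S+M_A$, the symmetric part $M_S$ contributes only to the real part (its cross terms cancel) while the anti-symmetric part $M_A$ contributes only to the imaginary part (its cross terms add, producing the factor $2$ in $a_i$). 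Substituting, each eigenvector produces a complex quadratic $\alpha_i\xi^{2}+\beta_i\xi+\gamma_i=0$ with $\alpha_i=s_i(M)+\mathrm{i}\,a_i(M)$, $\beta_i=s_i(N)+\mathrm{i}\,a_i(N)$, $\gamma_i=s_i(A)+\mathrm{i}\,a_i(A)$, whose two roots are exactly the two values of $\xi_i$ attached to $\mathbf{v}_i$.

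Next I would impose that both roots of this quadratic lie in the open left half-plane. I expect this to be the main obstacle, since the coefficients are genuinely complex and the ordinary real Routh--Hurwitz test does not apply directly. The cleanest route is to pass to the real polynomial $p(\xi)\overline{p}(\xi)$ (where $\overline{p}$ carries the conjugated coefficients), whose root set is $\{\xi_i,\overline{\xi_i}\}$ and hence lies in the left half-plane precisely when that of $p$ does, and then run the classical Hurwitz conditions for this polynomial of doubled degree; alternatively one uses the complex-coefficient (Hermite--Biehler / Bilharz) form directly, which is the ``general Hurwitz criterion'' invoked above. The genuine work is the algebraic bookkeeping that collapses the resulting determinantal inequalities down to the two compact conditions displayed: the first is just $\mathrm{Re}(\overline{\alpha_i}\beta_i)=s_i(M)s_i(N)+a_i(M)a_i(N)>0$, and the second is the stated second-order determinant in $\alpha_i,\beta_i,\gamma_i$.

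Finally I would assemble the global statement. For each mode the two inequalities are equivalent to stability of its two roots; letting $i$ range over all eigenvectors then characterises stability of the full pencil, provided the eigenvectors of $A$ also diagonalise $M$ and $N$ so that the $2n$ roots of the quadratic pencil are exhausted by these scalar quadratics --- the situation relevant to DFPM, where $M$ and $N$ are multiples of the identity. Necessity is immediate from the projection step, since every $\xi_i$ must satisfy its own scalar quadratic, while the sufficiency in the general setting is precisely what is carried out in detail in \citep{DiYe99}.
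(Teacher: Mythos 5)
Your overall route coincides with the paper's, to the extent the paper has one: the authors give no detailed argument, stating only that the theorem follows ``by applying the general Hurwitz criterion'' to (\ref{charpol}) and deferring the proof to \citep{DiYe99}. Your reconstruction of that route is correct and more explicit than the paper itself: the projection $\mathbf{v}_i^{*}(\xi^{2}M+\xi N+A)\mathbf{v}_i=0$, the identity $\mathbf{v}_i^{*}M\mathbf{v}_i=s_i(M)+\mathrm{i}\,a_i(M)$ (your cancellation argument for the symmetric and anti-symmetric parts, including the factor $2$ in $a_i$, is right), and the Bilharz/Hermite--Biehler criterion for a complex quadratic, whose two conditions collapse exactly to $\mathrm{Re}(\overline{\alpha_i}\beta_i)>0$ and $\mathrm{Re}(\overline{\beta_i}\gamma_i)\,\mathrm{Re}(\overline{\alpha_i}\beta_i)-\left(\mathrm{Im}(\overline{\alpha_i}\gamma_i)\right)^{2}>0$, matching the displayed inequalities.

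There is, however, a genuine logical error in your final assembly: you have the two directions of the equivalence backwards. What the projection step gives immediately is \emph{sufficiency}, not necessity: if the two inequalities hold for every eigenpair, then each actual eigenvalue $\xi_i$, being one root of its own projected quadratic, lies in the open left half-plane, and asymptotic convergence follows --- no exhaustion hypothesis on the eigenvectors is needed for this direction. \emph{Necessity} is the delicate direction, precisely because each projected quadratic has a second, companion root that in general is \emph{not} an eigenvalue of the pencil; asymptotic stability forces only the genuine eigenvalue root into the left half-plane and says nothing a priori about the spurious one, whereas the stated inequalities assert that \emph{both} roots are stable. So your sentence ``Necessity is immediate from the projection step, since every $\xi_i$ must satisfy its own scalar quadratic'' proves the wrong half, and your proviso that the eigenvectors simultaneously diagonalise $M$ and $N$ is what one would invoke for necessity, not for assembling sufficiency. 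Since you delegate the remaining direction to \citep{DiYe99} anyway --- as the paper does for the entire proof --- the architecture survives, but as written the hard half of the ``if and only if'' is both misidentified and left unproved.
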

\begin{cor}
\textsl{\label{cor:LinConv}If $M,N$ are positive definite and $A$
has eigenvalues with positive real parts then the solution to (\ref{LinDyn})
will converge asymptotically.}
\end{cor}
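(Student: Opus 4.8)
The plan is to verify that the two inequalities of Theorem \ref{thm:AsymLinear} hold for every eigenpair $(\xi_i,\mathbf{v}_i)$; since those conditions are necessary and sufficient, asymptotic convergence follows at once. The device I would use is to left-multiply the pencil equation (\ref{charpol}) by the conjugate transpose $\mathbf{v}_i^{*}$, collapsing the matrix relation to the scalar complex quadratic
$$(\mathbf{v}_i^{*}M\mathbf{v}_i)\,\xi_i^{2}+(\mathbf{v}_i^{*}N\mathbf{v}_i)\,\xi_i+(\mathbf{v}_i^{*}A\mathbf{v}_i)=0.$$
Splitting $\mathbf{v}_i=\mathrm{Re}(\mathbf{v}_i)+i\,\mathrm{Im}(\mathbf{v}_i)$ and using $\mathbf{x}^{T}M\mathbf{x}=\mathbf{x}^{T}M_S\mathbf{x}$ together with the symmetry of $M_S$ and antisymmetry of $M_A$, one checks the identity $\mathbf{v}_i^{*}M\mathbf{v}_i=s_i(M)+i\,a_i(M)$, and likewise for $N$ and $A$. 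Thus the quantities in the theorem are precisely the real and imaginary parts of these quadratic forms, and the two inequalities are exactly the Hurwitz conditions for the scalar quadratic to have both roots in the open left half-plane.

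The routine part is the sign bookkeeping. Since $M$ is positive definite its symmetric part $M_S$ is positive definite, so
$$s_i(M)=\mathrm{Re}(\mathbf{v}_i)^{T}M_S\,\mathrm{Re}(\mathbf{v}_i)+\mathrm{Im}(\mathbf{v}_i)^{T}M_S\,\mathrm{Im}(\mathbf{v}_i)>0,$$
because $\mathbf{v}_i\neq0$ forces at least one of $\mathrm{Re}(\mathbf{v}_i),\mathrm{Im}(\mathbf{v}_i)$ to be nonzero; the same argument gives $s_i(N)>0$. Writing $A\mathbf{v}_i=\lambda_i\mathbf{v}_i$ yields $\mathbf{v}_i^{*}A\mathbf{v}_i=\lambda_i\|\mathbf{v}_i\|^{2}$, so $s_i(A)=\mathrm{Re}(\lambda_i)\|\mathbf{v}_i\|^{2}>0$ by the hypothesis on $A$, while $a_i(A)=\mathrm{Im}(\lambda_i)\|\mathbf{v}_i\|^{2}$. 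In the cases of interest for DFPM the mass and damping matrices are diagonal, hence symmetric, so $M_A=N_A=0$ and $a_i(M)=a_i(N)=0$.

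With $a_i(M)=a_i(N)=0$ the first condition reduces to $s_i(M)s_i(N)>0$, which is immediate. The second condition then collapses to
$$s_i(M)\bigl(s_i(N)^{2}s_i(A)-s_i(M)\,a_i(A)^{2}\bigr)>0,$$
so, since $s_i(M)>0$, everything rests on $s_i(N)^{2}s_i(A)>s_i(M)\,a_i(A)^{2}$, which in the scalar-damping case is the sharp inequality $\eta_i^{2}\,\mathrm{Re}(\lambda_i)>\mu_i\,(\mathrm{Im}\,\lambda_i)^{2}$.

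This last inequality is the main obstacle, and it is where the genuine content of the corollary lives. When $A$ is symmetric — the setting of the symmetric eigenvalue problems DFPM is built for — its eigenvectors may be taken real and its eigenvalues are real, so $a_i(A)=\mathrm{Im}(\lambda_i)\|\mathbf{v}_i\|^{2}=0$; the second condition becomes $s_i(M)s_i(N)^{2}s_i(A)>0$, which holds because $\mathrm{Re}(\lambda_i)=\lambda_i>0$, and the proof is complete. For a genuinely non-normal $A$ with complex eigenvalues, positivity of the real parts alone does not close the argument, since $s_i(M)\,a_i(A)^{2}$ may dominate; one then needs in addition that the damping controls the imaginary parts, $\mu_i(\mathrm{Im}\,\lambda_i)^{2}<\eta_i^{2}\,\mathrm{Re}(\lambda_i)$. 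I would therefore either invoke the symmetry of $A$ explicitly or strengthen the damping hypothesis, and I expect this second Hurwitz inequality, rather than the straightforward sign conditions, to demand the most care.
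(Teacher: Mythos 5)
Your reduction is sound as far as it goes, but you should first know that the paper contains no proof to compare against: both Theorem \ref{thm:AsymLinear} and this corollary are attributed wholesale to \citep{DiYe99}, so your derivation-from-the-theorem is a reconstruction the paper never carries out. Your central computation is correct: left-multiplying (\ref{charpol}) by $\mathbf{v}_i^{*}$ does give the scalar quadratic, and the identity $\mathbf{v}_i^{*}M\mathbf{v}_i=s_i(M)+i\,a_i(M)$ (and likewise for $N,A$) checks out, since $\mathbf{x}^{T}M\mathbf{x}=\mathbf{x}^{T}M_S\mathbf{x}$ and $\mathrm{Re}(\mathbf{v}_i)^{T}M\,\mathrm{Im}(\mathbf{v}_i)-\mathrm{Im}(\mathbf{v}_i)^{T}M\,\mathrm{Re}(\mathbf{v}_i)=2\,\mathrm{Re}(\mathbf{v}_i)^{T}M_A\,\mathrm{Im}(\mathbf{v}_i)$. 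More importantly, the obstruction you isolate is genuine and not an artifact of your method: with $M=N=I$ and
$$A=\begin{pmatrix}1 & -100\\ 100 & 1\end{pmatrix},$$
whose eigenvalues $1\pm100i$ have positive real part, the quadratic $\xi^{2}+\xi+(1+100i)=0$ has a root with real part near $+6.5$, so (\ref{LinDyn}) diverges even though the corollary's hypotheses hold; equivalently, your inequality $\eta^{2}\,\mathrm{Re}(\lambda)>\mu\,(\mathrm{Im}\,\lambda)^{2}$ fails. So the corollary as printed is false for general $A$ with merely a right-half-plane spectrum, and your conclusion that one must either restrict to (effectively) symmetric $A$ or add a damping-strength hypothesis is the correct repair. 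In the paper's actual use of the corollary (Theorem \ref{thm:LocalConv} with $A=-J(\hat{\mathbf{u}})$, and the symmetric eigenvalue problems where $J$ is a Hessian), the spectrum is real, which is presumably the unstated context.

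Two smaller points of care. First, you conflate the pencil eigenvector of (\ref{charpol}) with an eigenvector of $A$ when you write $\mathbf{v}_i^{*}A\mathbf{v}_i=\lambda_i\|\mathbf{v}_i\|^{2}$; these coincide when $M,N$ are scalar multiples of the identity (as in your counterexample regime), but for general positive definite diagonal $M,N$ they need not, and then $s_i(A)=\mathrm{Re}(\mathbf{v}_i^{*}A\mathbf{v}_i)$ is a numerical-range quantity that can be negative even when all eigenvalues of $A$ have positive real part, so positivity of $s_i(A)$ is an additional assumption there, not a consequence of the spectral hypothesis. Second, in the symmetric case you do not need eigenvectors at all: $a_i(A)=2\,\mathrm{Re}(\mathbf{v}_i)^{T}A_A\,\mathrm{Im}(\mathbf{v}_i)$ vanishes identically because $A_A=0$, and $s_i(A)>0$ for every nonzero $\mathbf{v}_i$ once $A$ is symmetric positive definite; this closes the argument uniformly over all pencil eigenpairs, which is cleaner than your route through $A\mathbf{v}_i=\lambda_i\mathbf{v}_i$. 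With those two repairs your write-up is a complete and correct proof of the corrected statement, and it documents a hypothesis the paper should have made explicit.
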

Let us now return to the question of local convergence for (\ref{DFPM})
and state the following theorem.
\begin{thm}
\label{thm:LocalConv}Define $J(\mathbf{u})$ as the Jacobian of $\mathbf{F}$.
Assume that there exists a solution, $\hat{\mathbf{u}}$, to (\ref{eq:Fi}).
Further, assume that \[
\hat{M}\textrm{=diag}\mathit{\mathrm{(\hat{\mu}_{1},\ldots,\hat{\mu}_{m})}},\,\hat{N}\textrm{=diag}\mathit{\mathrm{(\hat{\eta}_{1},\ldots,\hat{\eta}_{m})}}\]
where $\hat{\mu}_{i},\hat{\eta}_{i}$ are the values of $\mu,\eta$
at $\hat{\mathbf{u}}$. Then DFPM will \textsl{converge asymptotically
}for any initial starting point of (\ref{DFPM}) close enough to $\hat{\mathbf{u}}$\textsl{
if and only if} the conditions in Theorem \ref{thm:AsymLinear} are
fulfilled where $M=\hat{M},N=\hat{N},A=-J(\hat{\mathbf{u}})$. Further,
if $\hat{M},\hat{N}$ are positive definite and $J(\hat{\mathbf{u}})$
has eigenvalues with negative real parts then DFPM will \textsl{converge
asymptotically }for any initial starting point of (\ref{DFPM}) close
enough to $\hat{\mathbf{u}}$\textsl{.}\end{thm}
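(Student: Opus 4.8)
The plan is to reduce the nonlinear second-order system (\ref{DFPM}) to a first-order autonomous system in $2n$ variables and then invoke the classical linearization theorem (Lyapunov's indirect method), so that stability of the equilibrium is decided by the linear system (\ref{HomLin}), whose asymptotic behaviour is already characterized by Theorem \ref{thm:AsymLinear}. Assuming without loss of generality that $\hat{\mathbf{u}}=0$ and that $\hat{M}$ is invertible (which holds since the masses are nonzero), I would set $\mathbf{v}=\dot{\mathbf{u}}$ and rewrite (\ref{DFPM}) with $M=\hat{M}$, $N=\hat{N}$ as
\[
\left[\begin{array}{c}\dot{\mathbf{u}}\\\dot{\mathbf{v}}\end{array}\right]=\left[\begin{array}{c}\mathbf{v}\\\hat{M}^{-1}\left(\mathbf{F}(\mathbf{u})-\hat{N}\mathbf{v}\right)\end{array}\right]=:G(\mathbf{w}),\quad\mathbf{w}=\left[\begin{array}{c}\mathbf{u}\\\mathbf{v}\end{array}\right],
\]
which has $G(0)=0$ because $\mathbf{F}(0)=0$.

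Next I would compute the Jacobian $DG(0)$. Using the Taylor expansion $\mathbf{F}(\mathbf{u})=J(0)\mathbf{u}+\mathcal{O}(\|\mathbf{u}\|^{2})$, the linearization $\dot{\mathbf{w}}=DG(0)\mathbf{w}$ is exactly the first-order form of the homogeneous system (\ref{HomLin}) with $M=\hat{M}$, $N=\hat{N}$ and $A=-J(\hat{\mathbf{u}})$. A short determinant computation shows that the eigenvalues $\xi_{i}$ of $DG(0)$ are precisely the roots of the quadratic eigenvalue equation (\ref{charpol}), and these roots lie strictly in the open left half-plane if and only if the (strict) Hurwitz inequalities stated in Theorem \ref{thm:AsymLinear} hold.

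The core of the argument is then the linearization theorem: since $G$ is $C^{1}$ near the equilibrium, asymptotic stability of $\dot{\mathbf{w}}=DG(0)\mathbf{w}$ implies local asymptotic stability of the full nonlinear system, while a characteristic root with strictly positive real part forces instability. Combined with the previous paragraph, this yields that DFPM converges asymptotically for all initial data close enough to $\hat{\mathbf{u}}$ exactly when the conditions of Theorem \ref{thm:AsymLinear} are met. The final assertion is then immediate: if $\hat{M},\hat{N}$ are positive definite and $J(\hat{\mathbf{u}})$ has eigenvalues with negative real parts, then $A=-J(\hat{\mathbf{u}})$ has eigenvalues with positive real parts, so Corollary \ref{cor:LinConv} applies to the linearized system and local asymptotic convergence follows.

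I expect the main obstacle to be the non-hyperbolic (critical) case. The linearization theorem is decisive only when no characteristic root lies on the imaginary axis; when a root has zero real part the linear test is inconclusive and the nonlinear terms may stabilize or destabilize the equilibrium. Because the inequalities in Theorem \ref{thm:AsymLinear} are strict, fulfilling them pushes all roots strictly into the left half-plane, so the ``if'' direction is clean. For a rigorous ``only if'' I would either restrict to the hyperbolic situation or argue that failure of the strict Hurwitz inequalities places a root in the closed right half-plane, which under hyperbolicity gives instability; I would state this hyperbolicity assumption explicitly rather than let the equivalence hinge on the degenerate case.
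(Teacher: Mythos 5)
Your proposal follows essentially the same route as the paper: the paper's proof is a one-line appeal to Theorem \ref{thm:AsymLinear} and Corollary \ref{cor:LinConv}, and you simply make explicit the steps it leaves implicit (the first-order reduction, the Jacobian computation identifying the linearization with (\ref{HomLin}), and Lyapunov's indirect method). Your closing caveat is well taken --- the strict ``if and only if'' is delicate in the non-hyperbolic case, where linearization is inconclusive, a point the paper's terse proof silently passes over --- but this is a refinement of the same argument, not a different one.
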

\begin{proof}
The first statement in the theorem follows directly from Theorem \ref{thm:AsymLinear}
and the second from Corollary \ref{cor:LinConv}.
\end{proof}

\section{Convergence rate\label{sec:Convergence-rate}}

\subsection{General results on convergence rate\label{sub:General-results-on}}

Sharp estimates of the convergence rate for DFPM in a general case
is difficult and not realistic. However, we shall give some important
special cases that is relevant for solving equations with DFPM, i.e.,
to achieve fast \textit{exponential} convergence. We emphasize that
this is crucial to attain an efficient method for solving (\ref{eq:Fi}).
We again assume without loss of generality that the solution of (\ref{eq:Fi})
is $\hat{\mathbf{u}}=0$.
\begin{defn}
The solution, $\hat{\mathbf{u}}$, to (\ref{eq:Fi}) is \textit{locally}
exponentially stable and the solution $\mathbf{u}(t)$ to (\ref{DFPM})
has a \textit{local} exponential convergence rate if there exists
an $\alpha>0$ and for every $\varepsilon>0$ and every $t_{0}\ge0$,
there exists a $\delta\left(\varepsilon\right)>0$ such that for all
solutions of (\ref{DFPM}) $\left\Vert \mathbf{u}(t)\right\Vert \leq\varepsilon e^{-\alpha(t\text{\textminus}t_{0})}$
for all $t\ge t_{0}$ whenever $\left\Vert \mathbf{u}(t_{0})\right\Vert <\delta\left(\varepsilon\right)$.
The solution, $\hat{\mathbf{u}}$, to (\ref{eq:Fi}) is \textit{globally}
exponentially stable and the solution $\mathbf{u}(t)$ to (\ref{DFPM})
has a \textit{global} exponential convergence rate if for $\beta>0$
there exists $k(\beta)>0$ such that $\left\Vert \mathbf{u}(t)\right\Vert \leq k(\beta)e^{-\alpha(t-t_{0})}$
whenever $\left\Vert \mathbf{u}(t_{0})\right\Vert <k(\beta)$.
\end{defn}
We begin by stating a general theorem from \citep{MiHoLi08} giving
one possible formulation of the requirements for exponential convergence
based on the existence of a Lyapunov function $L(\mathbf{u},\mathbf{\dot{u}})$.
\begin{thm}
\label{thm:ExpLyapunov}Assume that there exist a Lyapunov function
$L=L(\mathbf{u},\mathbf{\dot{u}}):B(r)\rightarrow\mathbb{R}$ , $B(r)=\left\{ \left\Vert \left[\begin{array}{c}
\mathbf{u}\\
\mathbf{\dot{u}}\end{array}\right]\right\Vert \leq r\right\} $ and four positive constants $c_{1},c_{2},c_{3}$, $p$ such that\[
c_{1}\left\Vert \left[\begin{array}{c}
\mathbf{u}\\
\mathbf{\dot{u}}\end{array}\right]\right\Vert ^{p}\text{\ensuremath{\le}}L(\mathbf{u},\mathbf{\dot{u}})\text{\ensuremath{\le}}c_{2}\left\Vert \left[\begin{array}{c}
\mathbf{u}\\
\mathbf{\dot{u}}\end{array}\right]\right\Vert ^{p}\]
and\[
\frac{dL}{dt}\leq-c_{3}\left\Vert \left[\begin{array}{c}
\mathbf{u}\\
\mathbf{\dot{u}}\end{array}\right]\right\Vert ^{p}\]
for all $\left[\begin{array}{c}
\mathbf{u}\\
\mathbf{\dot{u}}\end{array}\right]\in B(r)$. Then the solution, $\hat{\mathbf{u}}$, to (\ref{eq:Fi}) is locally
exponentially stable and the solution $\mathbf{u}(t)$ to (\ref{DFPM})
has a local exponential convergence rate. If $B(r)$ is the whole
$\mathbb{R}^{2n}$ the solution $\hat{\mathbf{u}}$ to (\ref{eq:Fi})
is globally exponentially stable and the solution $\mathbf{u}(t)$
to (\ref{DFPM}) has a global exponential convergence rate.
\end{thm}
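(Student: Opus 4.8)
The plan is to collapse the three hypotheses into a single scalar differential inequality for $L$ along trajectories and then integrate it. Write $\mathbf{z}=\left[\begin{array}{c}\mathbf{u}\\ \mathbf{\dot{u}}\end{array}\right]$ so that the norm appearing in all the hypotheses is $\left\Vert \mathbf{z}\right\Vert$. The first observation is that the dissipation bound gives $\dot{L}\le-c_{3}\left\Vert \mathbf{z}\right\Vert ^{p}\le0$, so $L$ is non-increasing along any solution that stays in $B(r)$; together with the sandwich bound $c_{1}\left\Vert \mathbf{z}\right\Vert ^{p}\le L\le c_{2}\left\Vert \mathbf{z}\right\Vert ^{p}$ this will let me trap the trajectory inside $B(r)$ once the initial data is small, which is the only point in the local statement that needs care.

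First I would establish the trapping. If $\left\Vert \mathbf{z}(t_{0})\right\Vert$ is small enough that $L(\mathbf{z}(t_{0}))\le c_{1}r^{p}$, then monotonicity of $L$ gives
$$c_{1}\left\Vert \mathbf{z}(t)\right\Vert ^{p}\le L(\mathbf{z}(t))\le L(\mathbf{z}(t_{0}))\le c_{1}r^{p}$$
for as long as the solution remains in $B(r)$; a standard continuity and maximality argument then rules out the trajectory ever reaching the boundary, so $\mathbf{z}(t)\in B(r)$ for all $t\ge t_{0}$ and the three hypotheses stay available throughout.

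Next I would derive the exponential decay. Using the upper bound $L\le c_{2}\left\Vert \mathbf{z}\right\Vert ^{p}$ to eliminate the norm from the dissipation estimate yields the autonomous scalar inequality $\dot{L}\le-c_{3}\left\Vert \mathbf{z}\right\Vert ^{p}\le-\frac{c_{3}}{c_{2}}L$, whence a Gr\"onwall argument (or direct integration of $d(\log L)/dt$) gives $L(\mathbf{z}(t))\le L(\mathbf{z}(t_{0}))\,e^{-(c_{3}/c_{2})(t-t_{0})}$. Feeding this back through the sandwich bounds gives
$$c_{1}\left\Vert \mathbf{z}(t)\right\Vert ^{p}\le L(\mathbf{z}(t))\le c_{2}\left\Vert \mathbf{z}(t_{0})\right\Vert ^{p}e^{-(c_{3}/c_{2})(t-t_{0})},$$
and taking $p$-th roots produces
$$\left\Vert \mathbf{z}(t)\right\Vert \le\left(\frac{c_{2}}{c_{1}}\right)^{1/p}\left\Vert \mathbf{z}(t_{0})\right\Vert \,e^{-\alpha(t-t_{0})},\qquad\alpha=\frac{c_{3}}{p\,c_{2}}.$$

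Finally I would match this to the definition of local exponential convergence. Since $\left\Vert \mathbf{u}(t)\right\Vert \le\left\Vert \mathbf{z}(t)\right\Vert$ and the DFPM initial condition $\dot{\mathbf{u}}(t_{0})=0$ makes $\left\Vert \mathbf{z}(t_{0})\right\Vert =\left\Vert \mathbf{u}(t_{0})\right\Vert$, the displayed estimate controls $\left\Vert \mathbf{u}(t)\right\Vert$ in terms of $\left\Vert \mathbf{u}(t_{0})\right\Vert$; given $\varepsilon>0$ one takes $\delta(\varepsilon)$ to be the smaller of the trapping threshold and $\varepsilon\,(c_{1}/c_{2})^{1/p}$, so that $\left\Vert \mathbf{u}(t_{0})\right\Vert <\delta(\varepsilon)$ forces the prefactor times $\left\Vert \mathbf{u}(t_{0})\right\Vert$ below $\varepsilon$ and hence $\left\Vert \mathbf{u}(t)\right\Vert \le\varepsilon\,e^{-\alpha(t-t_{0})}$. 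For the global statement the hypotheses hold on all of $\mathbb{R}^{2n}$, so the trapping step is vacuous and the identical differential-inequality argument yields the global bound, with $k(\beta)$ read off from the prefactor $(c_{2}/c_{1})^{1/p}$. The only genuinely delicate step is the trapping argument; everything else is the routine linear-ODE-in-$L$ computation.
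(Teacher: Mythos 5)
The paper offers no proof of Theorem \ref{thm:ExpLyapunov} to compare against: it is quoted as a known result from \citep{MiHoLi08} and stated without proof, just like Theorem \ref{thm:MHLTheorem}. Your argument supplies the missing proof and it is the standard one, and as far as I can check it is correct: trapping via monotonicity of $L$ combined with the lower sandwich bound (the upper bound guaranteeing that $L(\mathbf{z}(t_{0}))\le c_{1}r^{p}$ holds once $\left\Vert \mathbf{z}(t_{0})\right\Vert \le r\left(c_{1}/c_{2}\right)^{1/p}$), the scalar inequality $\dot{L}\le-(c_{3}/c_{2})L$ from the upper bound, Gr\"onwall, and $p$-th roots giving $\alpha=c_{3}/(pc_{2})$ with prefactor $(c_{2}/c_{1})^{1/p}$. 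You also correctly flag the trapping step as the only delicate point; since $B(r)$ is closed, the continuity/maximality argument does go through, and here $dL/dt$ must of course be read as the orbital derivative along solutions of (\ref{DFPM}), which is the intended meaning. Two small caveats on the final bookkeeping. First, your reduction of $\left\Vert \mathbf{z}(t_{0})\right\Vert$ to $\left\Vert \mathbf{u}(t_{0})\right\Vert$ leans on the convention $\dot{\mathbf{u}}(t_{0})=0$; the paper does impose this (Section 2), but the honest general statement places the smallness condition on the full state norm, and it would be worth saying so. Second, the paper's definition of global exponential stability uses the same constant $k(\beta)$ as both the initial threshold and the amplitude; since your prefactor $(c_{2}/c_{1})^{1/p}\ge 1$, matching that definition literally requires taking the threshold to be $k(\beta)\left(c_{1}/c_{2}\right)^{1/p}$ rather than $k(\beta)$ itself. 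This is a cosmetic mismatch caused by the paper's nonstandard phrasing of the definition, not a gap in your argument.
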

In the case that there exists a potential we can choose the Lyapunov
function as the total energy $E$ in (\ref{eq:E}). We will state
a theorem that proves exponential convergence in this case that is
slightly different from Theorem \ref{thm:ExpLyapunov}.
\begin{thm}
\label{thm:ExpTotalEnergy}Assume that there exists a potential $V(\mathbf{u})$
which is locally convex in a neighbourhood of the solution, $\hat{\mathbf{u}}$,
to (\ref{eq:Fi}) and satisfies the bound\begin{equation}
c_{1}\left\Vert \mathbf{u}\right\Vert ^{p}\text{\ensuremath{\le}}V(\mathbf{u})\text{\ensuremath{\le}}c_{2}\left\Vert \mathbf{u}\right\Vert ^{p}\label{eq:Vbound}\end{equation}
where $c_{1},c_{2}$ and $p$ are positive constants. Then $\hat{\mathbf{u}}$
is locally exponentially stable and the solution $\mathbf{u}(t)$
to (\ref{DFPM}) has a local exponential convergence rate. If the
potential is convex and satisfies the bound (\ref{eq:Vbound}) in
the whole $\mathbb{R}^{n}$ the solution $\mathbf{u}(t)$ to (\ref{DFPM})
has a global exponential convergence rate.\end{thm}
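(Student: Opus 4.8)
The plan is to apply Theorem \ref{thm:ExpLyapunov}, so the whole task reduces to exhibiting a Lyapunov function $L(\mathbf{u},\dot{\mathbf{u}})$ that is squeezed between two multiples of $\|(\mathbf{u},\dot{\mathbf{u}})\|^p$ and whose derivative along trajectories of (\ref{eq:gradDFPM}) is bounded above by $-c_3\|(\mathbf{u},\dot{\mathbf{u}})\|^p$. Working with $\mu_i\equiv1$, $\hat{\mathbf{u}}=0$ and, after a harmless shift, $V(0)=0$, the obvious candidate is the total energy $E=\tfrac12\|\dot{\mathbf{u}}\|^2+V(\mathbf{u})$ of (\ref{eq:E}). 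The difficulty is that Lemma \ref{lem:dEdt} only yields $dE/dt=-\dot{\mathbf{u}}^T N\dot{\mathbf{u}}$, which vanishes on the entire set $\{\dot{\mathbf{u}}=0\}$; hence $E$ by itself cannot meet the strict decay $dL/dt\le-c_3\|(\mathbf{u},\dot{\mathbf{u}})\|^p$ required by Theorem \ref{thm:ExpLyapunov}, since that estimate must also detect the displacement $\mathbf{u}$.

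To repair this I would add a small indefinite cross term and set $L=E+\varepsilon\,\dot{\mathbf{u}}^T\mathbf{u}$ with a constant $\varepsilon>0$ to be fixed. For the two-sided bound, Young's inequality gives $|\dot{\mathbf{u}}^T\mathbf{u}|\le\tfrac12(\|\dot{\mathbf{u}}\|^2+\|\mathbf{u}\|^2)$, which together with the hypothesis $c_1\|\mathbf{u}\|^p\le V(\mathbf{u})\le c_2\|\mathbf{u}\|^p$ pins $L$ between positive multiples of $\|(\mathbf{u},\dot{\mathbf{u}})\|^p$, provided $\varepsilon$ is small enough that the cross term cannot overpower the kinetic and potential parts. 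For the decay, I differentiate $L$ along (\ref{eq:gradDFPM}) and substitute $\ddot{\mathbf{u}}=-N\dot{\mathbf{u}}-\nabla V$ to get $dL/dt=-\dot{\mathbf{u}}^T N\dot{\mathbf{u}}+\varepsilon\|\dot{\mathbf{u}}\|^2-\varepsilon\,\dot{\mathbf{u}}^T N\mathbf{u}-\varepsilon\,\nabla V(\mathbf{u})^T\mathbf{u}$. Convexity of $V$ with minimizer $0$ supplies the crucial inequality $\nabla V(\mathbf{u})^T\mathbf{u}\ge V(\mathbf{u})\ge c_1\|\mathbf{u}\|^p$, so the last term is genuinely negative and furnishes the missing control on $\mathbf{u}$; the remaining contributions are handled by $\dot{\mathbf{u}}^T N\dot{\mathbf{u}}\ge\eta_{\min}\|\dot{\mathbf{u}}\|^2$ and by splitting the mixed term $\varepsilon\,\dot{\mathbf{u}}^T N\mathbf{u}$ with Young's inequality.

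The crux is then a two-parameter balancing act: first choose the Young splitting of $\dot{\mathbf{u}}^T N\mathbf{u}$ so that the displacement debt it creates stays below the convexity credit, then shrink $\varepsilon$ so that the surviving $\|\dot{\mathbf{u}}\|^2$-coefficient remains strictly dissipative. This produces $dL/dt\le-c_3\|(\mathbf{u},\dot{\mathbf{u}})\|^p$, and Theorem \ref{thm:ExpLyapunov} then delivers local exponential stability; the global statement follows verbatim, since when $V$ is convex and obeys the bound on all of $\mathbb{R}^n$ every estimate holds with $B(r)=\mathbb{R}^{2n}$. I expect the main obstacle to be exactly this reconciliation of powers: the kinetic energy $\tfrac12\|\dot{\mathbf{u}}\|^2$ is intrinsically quadratic in $\dot{\mathbf{u}}$, whereas $V\sim\|\mathbf{u}\|^p$, so the energy-based $L$ is homogeneous of degree $p$ only when $p=2$. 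For $p=2$ the cross-term bookkeeping closes cleanly as above; for $p\ne2$ the quadratic debt from Young's inequality no longer matches the degree-$p$ credit from convexity near $\hat{\mathbf{u}}$, and making the claim precise would require either restricting to $p=2$ or replacing $E$ by a Lyapunov function whose kinetic part is likewise tuned to degree $p$.
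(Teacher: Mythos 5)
Your proposal is correct in the case it actually closes ($p=2$), but it takes a genuinely different route from the paper. The paper's own proof works with the unmodified total energy $E=T+V$ of (\ref{eq:E}): it squeezes $E$ between $c_{1}\|\mathbf{u}\|^{p}+\frac{1}{2}\mu_{max}\|\dot{\mathbf{u}}\|^{2}$ and $c_{2}\|\mathbf{u}\|^{p}+\frac{1}{2}\mu_{max}\|\dot{\mathbf{u}}\|^{2}$, divides the dissipation identity of Lemma \ref{lem:dEdt} by $E$, and then asserts---invoking Theorem \ref{thm:ConvDFPMPotential} for the claim that $\|\dot{\mathbf{u}}\|>0$ away from $\hat{\mathbf{u}}$---that there is a uniform $\gamma>0$ with $dE/dt\le-\gamma E$, whence $E\le E_{0}e^{-\gamma(t-t_{0})}$ and $\|\mathbf{u}\|\le ce^{-\gamma(t-t_{0})/p}$ by (\ref{eq:Vbound}). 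You instead build the perturbed energy $L=E+\varepsilon\,\dot{\mathbf{u}}^{T}\mathbf{u}$ and feed it into Theorem \ref{thm:ExpLyapunov}, using the convexity estimate $\nabla V(\mathbf{u})^{T}\mathbf{u}\ge V(\mathbf{u})\ge c_{1}\|\mathbf{u}\|^{p}$ (which is indeed valid: it is $V(0)\ge V(\mathbf{u})-\nabla V(\mathbf{u})^{T}\mathbf{u}$ with $V(\hat{\mathbf{u}})=0$) to obtain strict decay in the displacement variable. What your route buys is rigor exactly where the paper is weakest: the paper's pointwise inequality $dE/dt\le-\gamma E$ does not actually follow, because along an underdamped trajectory $\dot{\mathbf{u}}$ vanishes at turning points where $\mathbf{u}\neq\hat{\mathbf{u}}$ and $E>0$, so $(dE/dt)/E=0$ at those instants and no uniform $\gamma$ exists pointwise; Theorem \ref{thm:ConvDFPMPotential} only gives asymptotic approach to the equilibrium, not $\|\dot{\mathbf{u}}\|>0$ along the whole orbit. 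Your cross term is the standard repair of precisely that defect, and your opening observation that $E$ alone cannot satisfy the strict-decay hypothesis of Theorem \ref{thm:ExpLyapunov} is the same defect stated in Lyapunov language.

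Your closing caveat about $p\neq2$ is not a cosmetic worry but a genuine limitation of the theorem as stated, and it afflicts the paper's argument as much as yours. For the scalar potential $V(u)=|u|^{p}$ with $p>2$, which is convex and satisfies (\ref{eq:Vbound}) with $c_{1}=c_{2}=1$, the restoring force near the origin is $O(|u|^{p-1})=o(|u|)$, the equilibrium is degenerate, and the damped dynamics decays only algebraically (in the overdamped regime $\eta\dot{u}\approx-pu^{p-1}$ gives $u(t)\sim t^{-1/(p-2)}$); no exponential rate is possible, so neither the paper's uniform $\gamma$ nor your $c_{3}$ can exist. Moreover, for $p\neq2$ even the two-sided bound hypothesis of Theorem \ref{thm:ExpLyapunov} fails for $E$ (and for your $L$), since the kinetic term is intrinsically quadratic in $\dot{\mathbf{u}}$ and cannot be comparable to $\|(\mathbf{u},\dot{\mathbf{u}})\|^{p}$. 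So your restriction to $p=2$, or to a Lyapunov function with a retuned kinetic part, is the honest scope of the claim---a point the paper's proof passes over silently.
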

\begin{proof}
From (\ref{eq:Vbound}) and the definition of kinetic energy we have
that the total energy is bounded as\begin{equation}
c_{1}\left\Vert \mathbf{u}\right\Vert ^{p}+\frac{1}{2}\mu_{max}\left\Vert \mathbf{\dot{u}}\right\Vert ^{2}\leq E\leq c_{2}\left\Vert \mathbf{u}\right\Vert ^{p}+\frac{1}{2}\mu_{max}\left\Vert \dot{\mathbf{u}}\right\Vert ^{2}\label{eq:Ebound}\end{equation}
Thus, from Lemma \ref{lem:dEdt} we have \[
\frac{\frac{dE}{dt}}{E}\leq\frac{-\eta_{max}\left\Vert \dot{\mathbf{u}}\right\Vert ^{2}}{c_{2}\left\Vert \mathbf{u}\right\Vert ^{p}+\frac{1}{2}\mu_{max}\left\Vert \dot{\mathbf{u}}\right\Vert ^{2}}\]
If $V$ is convex we see from Theorem \ref{thm:ConvDFPMPotential}
that $\left\Vert \dot{\mathbf{u}}\right\Vert >0$ unless at the solution
$\hat{\mathbf{u}}$ and therefore there exists a constant $\gamma>0$
such that \[
\frac{dE}{dt}\leq-\gamma E\]
and thus $E\leq E_{0}e^{-\gamma(t-t_{0})}$. From (\ref{eq:Ebound})
we see that \[
c_{1}\left\Vert \mathbf{u}\right\Vert ^{p}+\frac{1}{2}\mu_{max}\left\Vert \mathbf{\dot{u}}\right\Vert ^{2}\leq E\leq E_{0}e^{-\gamma(t-t_{0})}\]
and this implies\[
\left\Vert \mathbf{u}\right\Vert \leq ce^{-\frac{1}{p}\gamma(t-t_{0})}\]
for some positive constant $c$ depending only on the initial conditions.
\end{proof}

\subsection{Convergence rate for linear problems\label{sub:Convergence-rate-for}}

Consider again the damped harmonic oscillator (\ref{eq:harmonic oscillator}).
Obviously, the convergence rate is exponential for all different dampings.
However, the fastest convergence rate is achieved for the case where
the damping is chosen as critical damping which can be seen in (\ref{eq:HarmRoots}):
The negative real part below critical damping is $\eta/2$ and therefore
$\eta$ should be as large as possible. However, as soon as critical
damping is exceeded, one of the roots will be real and the negative
real part will increase for the larger real root.

This property is inherited for more general linear problems defined
by (\ref{LinDyn}) which we now shall investigate further. We will
assume that $M$ and $N$ are diagonal matrices with positive elements
and then we can, without loss of generality, assume that $M=I,\mathbf{b}=0$
and consider the system\begin{equation}
\ddot{\mathbf{u}}+N\dot{\mathbf{u}}+A\mathbf{u}=0,N\textrm{=diag}\mathit{\mathrm{(\eta_{1},\ldots,\eta_{n})}}\label{eq:LinNA}\end{equation}
where $A\in\mathbb{R}^{n\times n}$. This linear system can be restated
as \[
\left[\begin{array}{c}
\dot{\mathbf{u}}\\
\dot{\mathbf{v}}\end{array}\right]=\left[\begin{array}{rr}
0 & I\\
-A & -N\end{array}\right]\left[\begin{array}{c}
\mathbf{u}\\
\mathbf{v}\end{array}\right]\]
and since this is a linear autonomous system of first order differential
equations, it is well known that any convergent solution will have
exponential convergence rate. However, we are interested in solving
the linear equations as fast as possible and then it is reasonable
to try to answer the question: How fast is the exponential rate of
convergence? This is a difficult question for a general $A$ and $N$
because of the following result from linear system theory \citep{Ph03}.
\begin{thm}
\label{thm:Similarity}Any two square matrices that are diagonalizable
have the same similarity transformation if and only if they commute.
\end{thm}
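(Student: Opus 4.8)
The plan is to read ``having the same similarity transformation'' as being \emph{simultaneously diagonalizable}, i.e.\ that there is a single invertible $P$ with both $P^{-1}AP$ and $P^{-1}BP$ diagonal; the statement then becomes the classical fact that two diagonalizable matrices are simultaneously diagonalizable if and only if they commute, and I would treat the two implications separately. One direction is immediate: if $P^{-1}AP=D_{A}$ and $P^{-1}BP=D_{B}$ with $D_{A},D_{B}$ diagonal, then $A=PD_{A}P^{-1}$ and $B=PD_{B}P^{-1}$, so using the fact that diagonal matrices commute,
\[
AB=PD_{A}D_{B}P^{-1}=PD_{B}D_{A}P^{-1}=BA.
\]

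For the converse I would begin from the eigenspace decomposition of $A$. Writing $\lambda_{1},\ldots,\lambda_{k}$ for the distinct eigenvalues of $A$ and $E_{i}=\ker(A-\lambda_{i}I)$, diagonalizability of $A$ gives $\mathbb{C}^{n}=E_{1}\oplus\cdots\oplus E_{k}$. The key observation is that each $E_{i}$ is $B$-invariant: if $A\mathbf{v}=\lambda_{i}\mathbf{v}$, then $A(B\mathbf{v})=B(A\mathbf{v})=\lambda_{i}(B\mathbf{v})$ by the hypothesis $AB=BA$, so $B\mathbf{v}\in E_{i}$.

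The step I expect to be the crux is showing that the restriction $B|_{E_{i}}$ is itself diagonalizable. This follows because $B$ diagonalizable means its minimal polynomial is a product of \emph{distinct} linear factors; the minimal polynomial of $B|_{E_{i}}$ divides that of $B$ and hence also has only distinct linear factors, which forces $B|_{E_{i}}$ to be diagonalizable. Granting this, I would pick within each $E_{i}$ a basis of eigenvectors of $B|_{E_{i}}$; each such vector lies in $E_{i}$ and is therefore automatically an eigenvector of $A$ with eigenvalue $\lambda_{i}$. Concatenating these bases over $i=1,\ldots,k$ yields a basis of $\mathbb{C}^{n}$ consisting of common eigenvectors of $A$ and $B$, and the matrix $P$ whose columns are these vectors simultaneously diagonalizes both, which completes the argument.
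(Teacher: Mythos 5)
Your proof is correct, and it is the standard argument; the interesting point of comparison is that the paper itself gives \emph{no} proof of this theorem at all --- it simply states it and cites the linear-systems literature (\citep{Ph03}). So your proposal is not a different route from the paper's but a complete substitute for a citation. Two remarks on your write-up. First, you rightly flag the interpretive issue: ``have the same similarity transformation'' is loose phrasing, and your reading as simultaneous diagonalizability (a single invertible $P$ with $P^{-1}AP$ and $P^{-1}BP$ both diagonal) is the one the paper needs, since it is invoked to decouple $\ddot{\mathbf{u}}+N\dot{\mathbf{u}}+A\mathbf{u}=0$ via $A=T\Lambda_{A}T^{-1}$, $N=T\Lambda_{N}T^{-1}$. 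Second, the crux you identify --- that $B$ restricted to each eigenspace $E_{i}$ of $A$ is again diagonalizable --- is handled properly: the minimal polynomial of $B|_{E_{i}}$ divides that of $B$, hence splits into distinct linear factors, which is exactly the characterization needed; the $B$-invariance of $E_{i}$ from $AB=BA$ is also argued correctly. One small caveat worth being aware of: you work over $\mathbb{C}$, while the paper's matrices are real; the common diagonalizing $T$ your argument produces may be complex (e.g.\ when $A$ has complex eigenvalues), which is harmless for the paper's purpose of decoupling into scalar damped oscillators but would need a word if one insisted on a real transformation. With that understanding, your proof is complete and fills a gap the paper leaves to a reference.
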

For the problem (\ref{eq:LinNA}), Theorem \ref{thm:Similarity} means
that $AN=NA$ is a necessary and sufficient condition for having a
similarity transformation such that $A=T\Lambda_{A}T^{-1},N=T\Lambda_{N}T^{-1}$
where $\Lambda_{A},\Lambda_{N}$ are diagonal matrices. In other words,
the two matrices $A$ and $N$ has to commute in order to decouple
the system (\ref{eq:LinNA}) into $n$ one dimensional damped harmonic
oscillators where the optimal damping is critical damping as shown
earlier. Note that the special case $N=\eta I$ where all damping
parameters are the same, trivially commutes with any matrix $A$.

\subsection{A discussion of optimal damping\label{sub:Discussion-about-optimal}}

The problem of choosing the damping in (\ref{DFPM}) such that the
asymptotic solution is attained, to some precision, in minimal time
is difficult, see, e.g., \citep{CaCo01},\citep{ShLaTo92} for some
special cases. Moreover, from a practical point of view this is not
very interesting since it requires \textit{a priori} knowledge of
the solution. A more interesting approach is to choose the damping
according to a local measure of curvature which we will discuss briefly.
Assume that the solution to (\ref{eq:Fi}) is $\hat{\mathbf{u}}=0$
and that there exists a potential $V$ that is convex with $V(\hat{\mathbf{u}})=0$
and a positive definite Hessian $\nabla^{2}V(u)$. Consider the case
with a single damping parameter $\eta=\eta(t)$. Then a Taylor expansion
at $\hat{\mathbf{u}}$ in (\ref{DFPM}) gives the approximate problem\begin{equation}
\ddot{\mathbf{u}}+\eta(t)\dot{\mathbf{u}}=-\nabla^{2}V(0)\mathbf{u}\label{eq:TaylorDFPM}\end{equation}
From the linear case treated in Section \ref{sub:Linear-Convergence-Analysis}
the optimal damping for (\ref{eq:TaylorDFPM}) is $\eta\thickapprox2\sqrt{\lambda_{min}(\nabla^{2}V(0))}$
where $\lambda_{min}(\cdot)$ denotes the smallest positive eigenvalue.
Now, consider any $\mathbf{u}(t),t\geq t_{0}$ then we conjecture
that a good choice of damping is $\eta(t)=2\sqrt{\lambda_{min}(\nabla^{2}V(\mathbf{u}(t)))}$.
To illustrate the possibilities of this choice of damping we given
an example with a potential \begin{equation}
V=e^{u_{1}^{2}+2u_{2}^{2}}\label{eq:expPot}\end{equation}
that is globally convex with a minimum at $u_{1}=u_{2}=0$. %
\begin{figure}
\begin{centering}
\includegraphics[bb=0bp 0bp 1024bp 483bp,width=8.6cm]{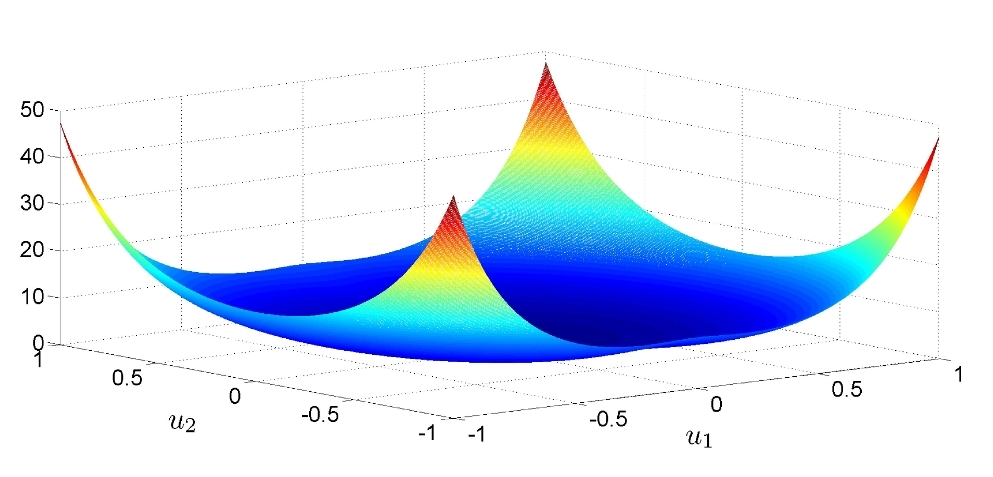}
\par\end{centering}

\caption{Smallest eigenvalue of the Hessian $\nabla^{2}V(\mathbf{u})$ for
the potential (\ref{eq:expPot}).\label{fig:EigDistribution}}

\end{figure}
In Figure \ref{fig:EigDistribution} the eigenvalue distribution is
shown for the smallest of the eigenvalues of $\nabla^{2}V$. Indeed,
looking at the trajectories in Figure \ref{fig:Trajectories} for
the choice $\eta(t)\equiv1$ (solid line) and $\eta(t)=1.9\sqrt{\lambda_{min}(\nabla^{2}V(\mathbf{u}(t)))}$
(curve indicated with '{*}') it is clearly seen how the choice of
damping affects the convergence. In fact, the effect is rather striking
and further analysis and tests of our conjecture is of great interest
in order to improve  DFPM.%
\begin{figure}[H]
\noindent \centering{}\includegraphics[width=8.6cm]{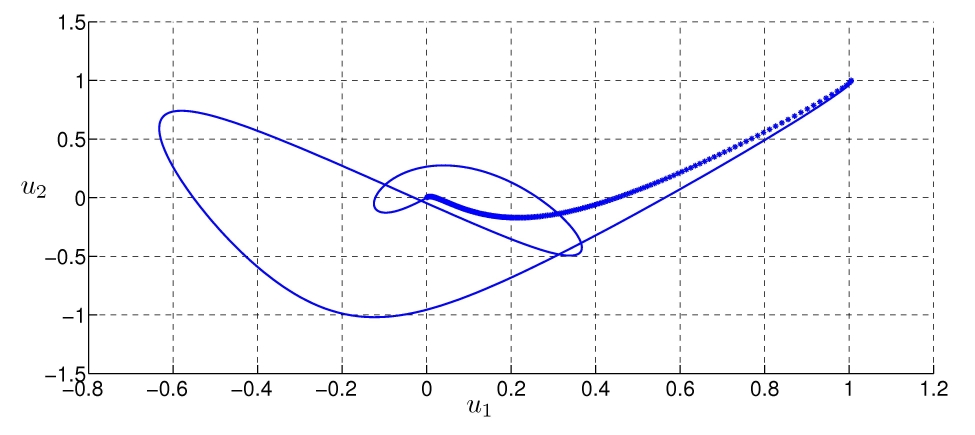}\caption{Trajectories for damping $\eta(t)\equiv1$ (solid line) and $\eta(t)=1.9\sqrt{\lambda_{min}(\nabla^{2}V(\mathbf{u}(t)))}$
(curve indicated with '{*}'). \label{fig:Trajectories} }

\end{figure}

\section{DFPM example for the Helium atom \label{sec:Helium}}

A relevant numerical application is to study the s-limit case of the
Helium ground state energy. This example is often used in atomic physics
literature as a benchmark to study numerical accuracy and efficiency.
The equation at hand is the Schrödinger equation. Due to electronic
correlation and consequently discontinuities ({}``Cato cusps'')
this is often considered to be a tough problem. Another complication
is the many-particle character leading to extremely high dimensionality.
The Helium example here only slightly touches these problems because
the full correlation term has been neglected, i.e., that term is replaced
by $1/max\left(r_{1},r_{2}\right)$ resulting in only a 2D problem.
This example is nevertherless sufficient to demonstrate many of the
properties of DFPM. More complex examples have already been tested
and DFPM remains relevant. However these fall outside the scope of
the present work.

Accordingly, consider the Schrödinger equation for the s-limit case
of Helium \citep{Edvard_cpc}:
\begin{equation}
\begin{split}
\hat{H}v\left(r_{1},r_{2}\right)&=\left[-\frac{1}{2}\frac{\partial^{2}}{\partial r_{1}^{2}}-\frac{1}{2}\frac{\partial^{2}}{\partial r_{2}^{2}}-\frac{2}{r_{1}}-\frac{2}{r_{2}}+\right. \\
&\left.+\frac{1}{max\left(r_{1},r_{2}\right)}\right]v\left(r_{1},r_{2}\right)=Ev\left(r_{1},r_{2}\right)\label{eq:test}
\end{split}
\end{equation}
The boundary conditions are given by $v\left(r_{1},0\right)=v\left(0,r_{2}\right)=v\left(R,r_{2}\right)=v\left(r_{1},R\right)=0$.
The discretization can be made by using central finite differences
with equidistant mesh sizes $h=0.1/1.1^{k}$ (for both $\Delta r_{1}$
and $\Delta r_{2}$) where $k$ is an integer chosen to get different
problem sizes. The discretized version of $\hat{H}v\left(r_{1i},r_{2j}\right)$
of a certain particle $p_{ij}$ at the position $\left(r_{1i},r_{2j}\right)$
becomes

\begin{equation*}
\begin{split}
\hat{H}v\left(r_{1i},r_{2j}\right)\approx Hu_{ij}&
=-\frac{1}{2}\frac{u_{i-1,j}+u_{i+1,j}+u_{i,j-1}+u_{i,j+1}-4u_{ij}}{h^{2}}-\\
&-\frac{2u_{ij}}{r_{1i}}-\frac{2u_{ij}}{r_{2j}}+\frac{u_{ij}}{max\left(r_{1i},r_{2j}\right)}
\end{split}
\end{equation*}

Note that the matrix $H$ is never explicitly needed so the formulation
is automatically sparse. The interaction functional component, i.e.,
the {}``force'' acting on the particle $p_{ij}$ at the position
$\left(r_{1i},r_{2j}\right)$ is given by $F_{ij}=<u|H|u>u_{ij}-Hu_{ij}$.
This can be compared with the equation (\ref{eq:eigDFPM}) derived
earlier. The notation $<u|H|u>$ is the trapezoidal approximation
to $\int_{0}^{R}v(\hat{H}v)\, dr_{1}dr_{2}$. The required norm $\left\Vert v\right\Vert =1$
is in the present context given by $<u|u>=1$. The DFPM equation \ref{DFPM}
for particle $p_{ij}$ is thus given by

\begin{equation}
<u|H|u>u_{ij}-Hu_{ij}=\mu\ddot{u}_{ij}+\eta\dot{u}_{ij},\,<u|u>=1\label{eq:test2}\end{equation}

In this case we apply constant mass and damping parameters ($\mu=1$
and $\eta=1.54$). The boundary is set to $R=15$ which is sufficient
for accurate ground state results. The integration method used to
solve the ODE (\ref{eq:test2}) is the symplectic Euler method, see
e.g. \citep{Hairer}. The related Störmer-Verlet method was tested
as well but gave no performance advantage. The test results are tabulated
in Table 6.1.\begin{table*} \centering \footnotesize \caption{Efficiency of DFPM for the s-limit Helium $^1S$ groundstate.}
\begin{tabular}{c c c c c c c c}  \hline  \hline $k$ & $N$ & $\Delta t$ & $E_0$ & DFPM (s) & DACG (s) & ARPACK (s) & S-Power (s)\\  \hline 4 & 23871 & 0.066 & -2.863893321606(6) & 0.6 & 0.9 & 7.7 & 14\\   \hline 6 & 34980 & 0.055 & -2.868655504822(7) & 1.1 & 1.5 & 17 & 34\\   \hline 8 & 51360 & 0.045 & -2.871926990227(9) & 1.9 & 2.6 & 31 & 68\\   \hline 10 & 75466 & 0.037 & -2.874170330715(4) & 3.4 & 4.7 & 62 & 153\\   \hline 12 & 110215 & 0.031 & -2.875706726414(1) & 5.8 & 8.2 & 127 & 319\\  \hline 14 & 161596 & 0.026 & -2.876758055924(6) & 10 & 14.4 & 265 & 676\\  \hline 16 & 237016 & 0.021 & -2.877477040659(9) & 18 & 24.9 & 583 & 1438\\  \hline 18 & 346528 & 0.017 & -2.877968543434(3) & 33 & 38.6 & 1188 & 3142\\  \hline 20 & 508536 & 0.014 & -2.878304445684(1) & 58 & 78.3 & 2560 & 6707\\ \hline 22 & 744810 & 0.011 & -2.878533964475(9) & 103 & 141.3 & - & -\\  \hline 24 & 1090026 & 0.009 & -2.878690772322(2) & 182 & 249.7 & - & -\\  \hline $\infty$ & $\infty$ & - & -2.8790287673(2) & - & - & - & - \\ \hline \hline \end{tabular} \end{table*}

The first column shows \emph{k} which determines the discretization
\emph{h} as mentioned above. In the second column, the corresponding
total number of particles, $N$, is listed. Only a triangular domain
needs to be computed due to even symmetry of the solution (i.e., $v\left(r_{1},r_{2}\right)=v\left(r_{2},r_{1}\right)$).
Then the third column contains the maximum timestep $\Delta t$ used
in the Symplectic Euler method (depends on \emph{h}). In the fourth
column the eigenvalues $E_{0}$ to 13 significant figures are listed.
These values can easily be extrapolated to continuum (i.e., $N\rightarrow\infty$).
This extrapolated value is listed in the last line. In the final three
columns we list the total CPU times in seconds to complete the computations
to the desired accuracy. DFPM and three other methods are compared.

A single C-code was written where the only difference was whether
a function call was made to DFPM, DACG, ARPACK or S-Power. The DACG
method (Deflation Accelerated Conjugated Gradient) is an advanced
method to find some of the lower eigenvalues of a symmetric positive
definite matrix. The algorithm is described in refs. \citep{Bergamaschi,Bergamaschi2}.
Unfortunately the DACG algorithm cannot be used directly because there
are both negative and positive eigenvalues present in the eigenvalue
problem (\ref{eq:test}). Consequently, a shift of the potential (diagonal
elements) had to be done. The shift for best performance was identified
to be 3.9. The shifted power method (i.e., 'S-Power') is a simple
method that converges to the dominant eigenvalue \citep{website:power}.
This shift was also adjusted to get the maximum performance possible
(about $0.5\left(E_{max}-E_{0}\right)$). In the case of ARPACK, it
is available at \citep{website:arpack}. This iterative numerical library
is based on the Implicitly Restarted Lanczos Method (IRLM). All tests
were performed on a Linux PC with 1 GB primary memory and the CPU
was a AMD Sempron 3600+ 2.0GHz. The compilers used were gcc and g77
version 3.4.4. Both the numerical library and C-code were compiled
using the optimization: '-O'. The CPU times were measured using the
Linux command 'time' three times (the best reading is listed in Table
6.1).

As can be seen in Table 6.1, the performance of DFPM is quite good
considering that no real optimization of DFPM has been attempted yet.
DACG performs in parity with DFPM, although it is some 40\% less efficient
for this example. Both ARPACK and S-Power are far behind. However,
ARPACK is seen to be more than twice as efficient as the S-Power method.
For the smaller problems in Table 6.1, DFPM is one order of magnitude
faster than ARPACK. For the larger problems, the advantage to DFPM
is seen to approach two orders of magnitudes. Also LAPACK was put
to test (routine DSBEVX). This routine computes selected eigenvalues
and, optionally, eigenvectors of a real symmetric band matrix. The
parameter 'jobz' was set to compute eigenvalues only and the range
was set to compute only the lowest eigenvalue. Unfortunately, due
to how DSBEVX handles matrix memory allocation, large sizes, as applied
here, is not realistic to compute. Even the smallest size \emph{N}
in Table 6.1 requires $\sim1000\, s$ to complete.

There are several reasons for good efficiency of DFPM. Firstly, the
symplectic integration method for the second order differential equation
allows a relatively large timestep. High numerical accuracy is not
necessary during the evolution towards the stationary state. Only
stability and speed is desired. Secondly, the cost per iteration is
small because the symplectic integration method is as fast as Euler's
method. Due to the damped dynamical system, the convergence rate is
exponential in time. This is also theoretically consistent with Section
\ref{sec:Convergence-rate}. %
\begin{figure}[ht]
\centering
\scalebox{0.35}{\includegraphics{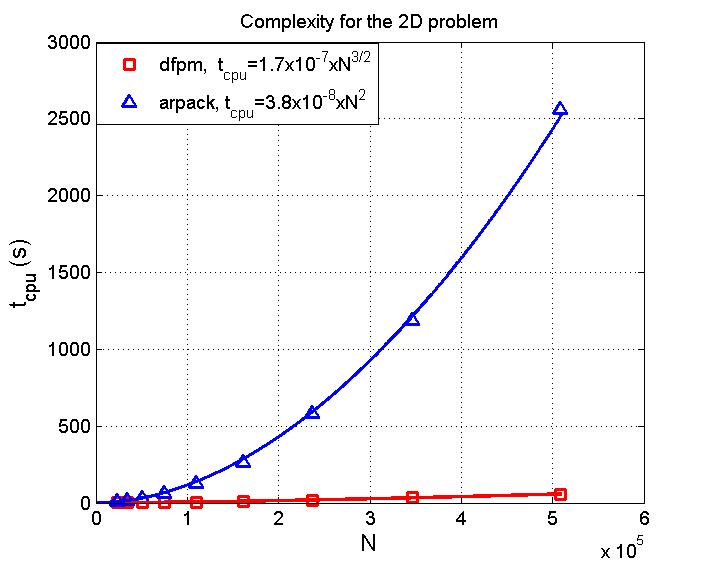}}
\caption{Computational complexity of ARPACK and DFPM.}
\end{figure}

Further, in Fig. 6.1 it is seen that the computational complexity
of ARPACK is given by $\mathcal{O}\left(N^{2}\right)$. The DFPM complexity,
however, is found to be $\mathcal{O}\left(N^{\frac{3}{2}}\right)$.
The cost of one iteration is the same for all the methods. Because
of the sparsity of $H$, it is $\mathcal{O}\left(N\right)$. The number
of iterations is what separates the various algorithms. For ARPACK
and S-Power the number of iterations are both $\mathcal{O}\left(N\right)$.
In fact, in the case of S-Power it is straight forward to prove that
the number of iterations for a \emph{d}-dimensional problem is given
by $\mathcal{O}\left(N^{2/d}\right)$, i.e., $\mathcal{O}\left(N\right)$
for $d=2$. DFPM and DACG, however, both show complexity $\mathcal{O}\left(N^{\frac{1}{2}}\right)$
for the number of iterations. It is interesting to discuss this further
for the DFPM algorithm.

Consider equation (\ref{eq:test2}) and let us assume that after only
a few iterations, $<u|H|u>\approx E_{0}$. Most of the iterations
are then spent solving:

\[
E_{0}u_{ij}-Hu_{ij}=\mu\ddot{u}_{ij}+\eta\dot{u}_{ij}\]

for each one of the particles $p_{ij}$. The symplectic integration
method is applied to approximate $u_{ij}\left(t\right)$ on its way
to the stationary solution. By applying a linear stability analysis
of the symplectic method for the problem at hand, one finds that the
maximum step size is

\[
\triangle t_{max}=\frac{2}{\sqrt{E_{1}-E_{0}}+\sqrt{E_{N-1}-E_{0}}}\]

where $E_{N-1}$ is the dominant eigenvalue. For the present problem,
using the central difference formula, it is easy to show that this
eigenvalue depends on the mesh size \emph{h} according to, $E_{N-1}=\mathcal{O}\left(1/h^{2}\right)$.
Since the mesh size only constitutes a minor correction to the lowest
eigenvalues $E_{0}$ and $E_{1}$, we have that $\triangle t_{max}=\mathcal{O}\left(h\right)$.
The equidistant mesh size \emph{h} and the total number of particles
\emph{N} in a \emph{d}-dimensional problem are related, i.e. $h\sim N^{-1/d}$,
thus $\triangle t_{max}=\mathcal{O}\left(N^{-1/d}\right)$. The number
of iterations is therefore given by $t/\triangle t_{max}=\mathcal{O}\left(N^{1/d}\right)$,
where $t$ is the total evolution time until the stationary solution
is achieved. The time $t$ does not depend on \emph{N. }That is, it
is assumed that the mesh is fine enough and that the symplectic Euler
algorithm approximately follows the true solution curve (i.e., the
continuum case). The total complexity for DFPM is then given by $\mathcal{O}\left(N^{(d+1)/d}\right)$.
In the present test case, \emph{d}=2, and the behavior in Fig. 6.1
is thus explained.

The presented benchmark indicates that DFPM is some 40\% more efficient
than DACG. DFPM can be further optimized by 1. allowing a varying
timestep during iterations, 2. preconditioning of the functional \emph{F}
and 3. allowing variation in the damping parameter during the iterations.
However, also DACG can be optimized by a preconditioning of the matrix
\emph{H}. In the benchmark tests we experience that DFPM is more robust
than DACG. DACG is quite sensitive to the selected shift. If the shift
is small it has tendencies to diverge (despite that all eigenvalues
are positive). If the shift is larger the convergence rate starts
to suffer. The selected shift is $\left|E_{0}\right|+1$ which gave
the best convergence rate. Since DACG requires that the matrix \emph{H}
is positive definite, it would seem that DFPM is a better choice for
Schrödinger problems where there often is a mix of positive and negative
eigenvalues. Initially, the lowest (negative) eigenvalue is of course
unknown which is why it can be difficult to apply DACG since one cannot
assume \emph{a priori} to know an appropriate shift. Another advantage
is that the DFPM algorithm (\ref{DFPM}) remains the same also for
nonlinear problems. The idea presented here is therefore quite versatile.
The basic idea of DFPM as a dynamical system may also be attractive
from a user's perspective since the algorithm is physically intuitive
and very pedagogical.

\section{Conclusions and Future Work\label{sec:Conclusions-and-Future}}

We have presented a new versatile method to solve equations that we
call the Dynamical Functional Particle method, DFPM. It is based on
the idea of formulating the equations as a finite dimensional damped
dynamical system where the stationary points are the solution to the
equations. The attractive properties of the method is that it has
an exponential convergence rate, includes a sparse formulation for
linear systems of equations and linear eigenvalue problems, and does
not require an explicit solver for nonlinear problems.

There are still a lot of interesting questions to be addressed. This
includes the details for solving the ODE (\ref{DFPM}) in the most
stable and efficient way. Motivated by the numerical tests reported
in Section \ref{sec:Helium} we believe that symplectic solvers, see
\citep{As08}, will be of great importance here. However, the stability
properties of the ODE solver is linked to the choice of parameters
and especially the damping parameter. The key question is how to find
the maximum time step retaining stability and getting a fast exponential
convergence rate. We are currently working on these issues using the
ideas presented here.

DFPM has proved useful for very large sparse linear eigenvalue problems
as indicated in Section \ref{sec:Helium}. It is plausible that DFPM
will also be efficient for very large and sparse linear system of
equations.

Since DFPM has a local exponential convergence rate it may be that
it can be an alternative to the standard methods for nonlinear system
of equations such as quasi-Newton or trust-region methods, see \citep{De04}.
Moreover, if there exists a potential (or a Lyapunov function) DFPM
has  global convergence properties that can be useful for developing
a solver for nonlinear problems with multiple solutions.

\bibliographystyle{plain}
\bibliography{DFPM_AO.bbl}

\end{document}